\documentclass[10pt]{amsproc}
\usepackage{amssymb}
\usepackage{graphicx,color}
\usepackage{amsmath}
\usepackage{amscd} 


\theoremstyle{plain}
 \newtheorem{thm}{Theorem}[section]
 \newtheorem{prop}{Proposition}[section]
 \newtheorem{lem}{Lemma}[section]
 
 \newtheorem{claim}{Claim}[section]
\theoremstyle{definition}
 
 \newtheorem{dfn}{Definition}[section]
\theoremstyle{remark}
 \newtheorem{rem}{Remark}[section] 
 \numberwithin{equation}{section}

\renewcommand{\leq}{\leqslant}
\renewcommand{\geq}{\geqslant}
\renewcommand{\setminus}{\smallsetminus}
\setlength{\textwidth}{28cc} \setlength{\textheight}{44cc}

\title[Sums of two homogeneous Cantor sets]
{Sums of two homogeneous Cantor sets}

\author[Y.\ Takahashi]{YUKI TAKAHASHI}

\address{Department of Mathematics, Bar-Ilan University, Ramat-Gan, 5290002, Israel}

\email{takahashi@math.biu.ac.il}

\thanks{Y.\ T. \ was supported in part by NSF grant DMS-1301515 (PI: A.\ Gorodetski) and by the Israel Science Foundation grant 396/15 (PI: B.\ Solomyak).}

\date{today}

\begin{document}

\vspace{18mm}
\setcounter{page}{1}
\thispagestyle{empty}

\begin{abstract}
We show that for any two homogeneous Cantor sets with sum of Hausdorff dimensions that exceeds 1, one can create an interval in the sumset by applying arbitrary small perturbations (without leaving the class of homogeneous Cantor sets). 
In our setting the perturbations have more freedom than in the setting of the Palis' conjecture, so 
our result can be viewed as an affirmative answer to a weaker form of the Palis' conjecture. 
We also consider self-similar sets with overlaps on the real line (not necessarily homogeneous), and show that 
one can create an interval by applying arbitrary small perturbations, if the uniform self-similar measure has $L^2$-density.  
\end{abstract}
\maketitle

\section{Introduction and main results}
\subsection{Introduction}
Arithmetic sums and differences of two dynamically defined Cantor sets have been considered in many papers and in many different settings 
(e.g., \cite{Anisca}, \cite{Astels}, \cite{DG1}, \cite{DG}, \cite{Dekking}, \cite{Eroglu}, \cite{GN}, \cite{Hall}, \cite{Honary}, \cite{Mendes}, \cite{Mora}, \cite{Moreira00}, \cite{Moreira}, \cite{Moreira2}, 
\cite{Moreira3}, \cite{Nazarov}, \cite{Solomyak}, \cite{Solomyak1997}). 
It arises naturally in dynamical systems (e.g., \cite{PalisTakens}), in number theory (e.g., \cite{Astels}, \cite{Hall}), and in spectral theory (e.g., \cite{DG1}, \cite{DG}). 
It also has natural connection to the study of intersections of Cantor sets 
(e.g., \cite{Hunt}, \cite{KP}, \cite{Kraft}, \cite{Kraft3}, \cite{Moreira0}, \cite{Newhouse}). 

Motivated by questions in smooth dynamics, Palis conjectured that for generic pairs of dynamically defined 
Cantor sets either their arithmetic sum has zero Lebesgue measure or else it contains an interval (see, for example, \cite{PalisTakens}). 
For nonlinear case this was proven in \cite{Moreira3}. The problem is still open for affine Cantor sets. 
Even for the case of middle-$\alpha$ Cantor sets this question has not yet completely settled. 
Let us denote the middle-$\alpha$ Cantor set whose convex hull is $[0, 1]$ by $C_a$, where $a = \frac{1}{2}( 1 - \alpha )$. 
It is known that if the sum of the Hausdorff dimensions of two Cantor sets is smaller than $1$ then the arithmetic sum is a Cantor set 
of zero Lebesgue measure (see Proposition 1 in section 4 from \cite{PalisTakens}). Therefore, if 
\begin{equation*}
\frac{\log2}{ \log \left(1/a \right) } + \frac{\log2}{ \log \left( 1/ b \right) } < 1, 
\end{equation*}
then $C_a + C_b$ is a Cantor set. On the other hand, if 
\begin{equation*}
\frac{a}{1 - 2a} \cdot \frac{b}{1 - 2b} \geq 1,
\end{equation*}
then by Newhouse's Gap Lemma (see section 4.2 in \cite{PalisTakens}), the set $C_a + C_b$ is an interval. 
This still leaves a ``mysterious region" $\mathcal{R}$ where the morphology of the sumset is unclear. See Figure \ref{mysterious}. 
In \cite{Solomyak1997}, Solomyak showed that for a.e. $(a, b) \in \mathcal{R}$ the set $C_a + C_b$ has positive Lebesgue measure, 
and in \cite{Pourbarat2} Pourbarat constructed a nonempty open set contained in $\mathcal{R}$ such that if $(a, b)$ belongs to this 
open set then $C_a + C_b$ has (persistently) nonempty interior. 
It is still an open question whether the sum contains an interval for a.e. $(a, b) \in \mathcal{R}$.

In this paper we show that for any two homogeneous Cantor sets  
one can create an interval in the sumset by applying arbitrary small perturbations, if 
the sum of their Hausdorff dimensions is greater than $1$. 
In our setting the perturbations have more freedom than in the setting of the Palis' conjecture, so 
our result can be viewed as an affirmative answer to a weaker form of the Palis' conjecture.  
We rely heavily on the techniques invented by Moreira and Yoccoz in \cite{Moreira3}. 
Their proof is very technical and involved but it can be simplified significantly in the context of this paper, 
while it still contains many of the key ideas in \cite{Moreira3}. 
One big difference is that we do not require the Scale Recurrence Lemma (see section 3 in \cite{Moreira3}), which is a deep result about 
the relative sizes of nonlinear Cantor sets under renormalization operations. 
\begin{centering}
\begin{figure}[t]
\includegraphics[scale=1.00]{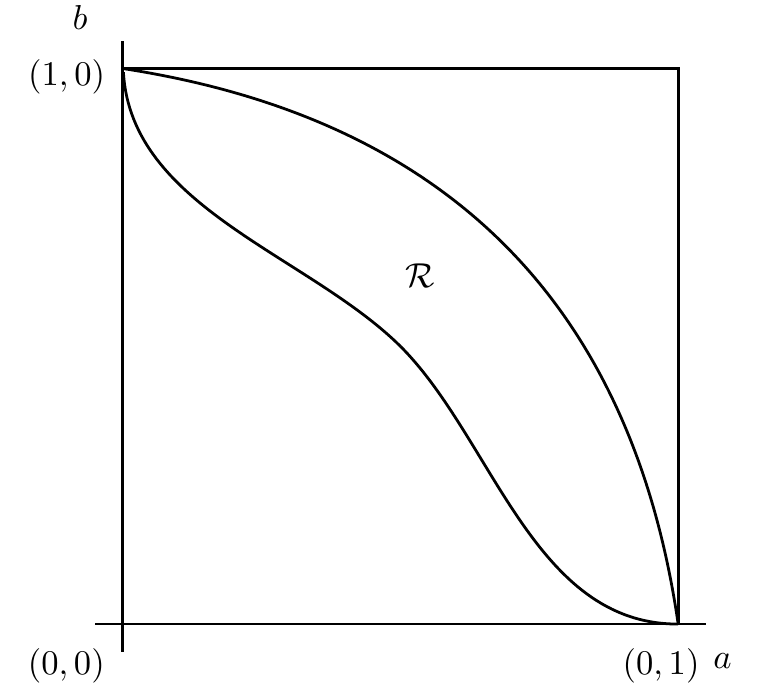}
\caption{``mysterious region" $\mathcal{R}$}
\label{mysterious}
\end{figure}
\end{centering}

The basic idea of the proof is to create a recurrent set. A set $\mathcal{L} \subset \mathbb{R}$ 
is a recurrent set if any point in $\mathcal{L}$ 
can go back to ``well inside $\mathcal{L}$" by the action of a renormalization operator. 
In our settings, recurrent set is 
a set of relative positions of the given two Cantor sets, and  
can be viewed as a compact subset of $\mathbb{R}$ 
(in \cite{Moreira3} recurrent set is more complicated due to the nonlinearity of Cantor sets). 
To create a recurrent set, we first construct a set which is ``very likely" to be a recurrent set and make it really a recurrent set by applying a small perturbation. 
Let us remark that in some papers 
recurrent sets are explicitly constructed for some affine Cantor sets \cite{Honary}, \cite{Pourbarat}, \cite{Pourbarat2}. 
We will also show that analogous result holds for sums of homogeneous Cantor sets with itself.   
Note that our results do not follow from \cite{Moreira3}. This is because (at least) one Cantor set has to be nonlinear in order to 
apply the Scale Recurrence Lemma. 
Let us also remark that in our setting considering sums of two homogeneous Cantor sets is essentially equivalent to considering 
self-similar sets with overlaps on the real line. 
Questions on self-similar sets and measures with overlaps have been considered in many papers 
(e.g. \cite{Hochman}, \cite{PS}, \cite{Peres}, \cite{Shmerkin0}, \cite{Shmerkin}, \cite{SS}, \cite{Varju}) and are known to be extremely difficult. 
We show that for any self-similar sets with overlaps   
one can create an interval by applying arbitrary small perturbations, if the uniform self-similar measure has $L^2$-density.

\subsection{Homogeneous Cantor sets and main results}\label{section2}

For any compact set $A \subset \mathbb{R}$, we denote the convex hull of 
$A$ by $\mathrm{con}(A)$. 

\begin{dfn}\label{selfsimilar}
A set $K \subset \mathbb{R}$ is \emph{self-similar} if the following holds: 
there exists a finite alphabet $\mathcal{A}$ and a set of linear contractions $\mathcal{F} = \{ f_a \}_{a \in \mathcal{A}}$ 
on $\mathbb{R}$ 
such that 
\begin{equation*}
K = \displaystyle{ \bigcup_{a \in \mathcal{A}} f_a(K) }.
\end{equation*} 
If, in addition, each $f_a \ (a \in \mathcal{A})$ has the same contracting ratio $0 < \rho < 1$ and 
$f_a( \mathrm{con}(K) ) \ (a \in \mathcal{A})$ are pairwise disjoint, we call $K$ a \emph{homogeneous Cantor set}. 

We denote $f_a( \mathrm{con}(K) )$ by $I(a)$. 
Similarly, for $a_1, a_2 \in \mathcal{A}$ we denote $(f_{a_1} \circ f_{a_2}) ( \mathrm{con}(K) )$ by $I(a_1 a_2)$. 
We call $\mathcal{F}$ a \emph{set of $\rho$-contractions} associated to $K$. 
\end{dfn}

We denote the set of homogeneous Cantor sets and the set of self-similar sets by $\mathcal{H}$, $\mathcal{H}_0$, respectively. 

\begin{rem}
Definition \ref{selfsimilar} is not the most standard. 
Self-similar set is normally defined as a set $K$ together with a set of contracting maps which generates $K$. 
\end{rem}

It is easy to see that for any $n \in \mathbb{N}$, 
homogeneous Cantor set $K$ can also be generated by the set of contractions 
\begin{equation*}
\left\{  f_{a_1} \circ \cdots \circ f_{a_n} : a_1, \cdots, a_n \in \mathcal{A}  \right\}. 
\end{equation*}
Therefore, we can assume that $\rho > 0$ is arbitrary small. 

Let $K$, $K'$ be homogeneous Cantor sets. We denote by 
$d$ (resp. $d'$) the Hausdorff dimension of $K$ (resp. $K'$). 
Let $\mu$ (resp. $\mu'$) be the uniform self-similar probability measure associated to $K$ (resp. $K'$).

\begin{dfn}
Let $\mathcal{Y}$ be the set of pairs of homogeneous Cantor sets $(K, K')$ such that 
\begin{itemize}
\item[(i)] $\mu \ast \mu'$ has $L^2$-density;
\item[(ii)] $d + d' > 1$; 
\item[(iii)] there exist $\rho$-contractions $\mathcal{F}$ (resp. $\mathcal{F}'$) associated to $K$ (resp. $K'$) for some $0 < \rho < 1$  
(both $\mathcal{F}$ and $\mathcal{F}'$ are sets of contractions whose contracting ratio is $\rho$).  
\end{itemize} 
\end{dfn}

\begin{rem}
The first condition in the above definition is quite mild.  
Kaufman's proof of the Marstrand' theorem tells us that,  
under the condition of $d + d' > 1$, the measure $\mu \ast \lambda \mu'$ has $L^2$-density for Lebesgue almost all $\lambda$ 
(see, for example, section 4.2 in \cite{PalisTakens}).  
\end{rem}

\begin{rem}
The third condition is also not too restrictive. 
Assume that $\mathcal{F}$ (resp. $\mathcal{F}'$) is a set of $\lambda$- (resp. $\lambda'$-) contractions associated to $K$ (resp. $K'$). 
Then, it is easy to see that the third condition is equivalent to 
\begin{equation*}
\frac{ \log \lambda }{ \log \lambda' } \in \mathbb{Q}, 
\end{equation*}
and this assumption can be satisfied by applying arbitrary small perturbations. 
\end{rem}

For any two intervals $J, \widetilde{J} \subset \mathbb{R}$ with $| J | = | \widetilde{J} |$, 
we denote $|t| / |J|$ by $\Delta( J, \widetilde{J} )$, where $t$ is the real number 
which satisfies $J + t = \widetilde{J}$. We put a topology on $\mathcal{H}$ in the following way:

Let $K \in \mathcal{H}$, and let $\mathcal{U}_{\epsilon, K}$ be the set of all 
$\widetilde{K} \in \mathcal{H}$ such that the following holds: there exist sets of contractions $\mathcal{F} = \{f_a\}_{a \in \mathcal{A}}$
 (resp. $\widetilde{\mathcal{F}} = \{\tilde{f}_{\tilde{a}}\}_{\tilde{a} \in \widetilde{ \mathcal{A} } }$) 
associated to $K$ (resp. $\widetilde{K}$) such that 
\begin{itemize}
\item[(i)] $|\mathrm{con}(K)| = | \mathrm{con}( \widetilde{K} ) |$; 
\item[(ii)] $\mathcal{A} = \widetilde{\mathcal{A}}$; 
\item[(iii)] $| I(a) | = | \widetilde{I}(a) |$ for all $a \in \mathcal{A}$; 
\item[(iv)] $\Delta( I(a), \widetilde{I}(a) ) \leq \epsilon$ for all $a \in \mathcal{A}$. 
\end{itemize}
We consider the topology on $\mathcal{H}$ generated by 
$\left\{ \mathcal{U}_{\epsilon, K} \mid \epsilon > 0, K \in \mathcal{H} \right\}$. 
By verbatim repetition, we define a topology on $\mathcal{H}_0$.  

\vspace{1mm}

Our main results are the following: 

\begin{thm}\label{thm1}
Let $(K, K') \in \mathcal{Y}$. 
Then, for every $\epsilon > 0$, there exists $\widetilde{K} \in \mathcal{H}$ such that 
\begin{itemize}
\item[(i)] $\widetilde{K}$ is $\epsilon$-close to $K$; 
\item[(ii)] $\widetilde{K} + K'$ contains an interval. 
\end{itemize}
\end{thm}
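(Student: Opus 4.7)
The plan is to adapt the Moreira--Yoccoz \emph{recurrent set} construction, which is considerably simpler here because $K$ and $K'$ share the contraction ratio $\rho$, so scales remain synchronized under the renormalization and no Scale Recurrence Lemma is needed.

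After replacing $\mathcal{F},\mathcal{F}'$ by a high iterate we may assume $\rho$ is as small as convenient. Given $t\in\mathbb{R}$ and level-$n$ cylinders $I(\mathbf{a})\subset K$, $I'(\mathbf{b})\subset K'$ whose Minkowski sum contains $t$, define the rescaled discrepancy
\begin{equation*}
\lambda_n=\rho^{-n}\bigl(\mathrm{mid}(I(\mathbf{a}))+\mathrm{mid}(I'(\mathbf{b}))-t\bigr),
\end{equation*}
which lives in a fixed bounded interval. Extending $(\mathbf{a},\mathbf{b})$ by one letter each produces an affine renormalization $\lambda_n\mapsto\rho^{-1}\lambda_n+c(a_{n+1},b_{n+1})$, with offsets $c(\cdot,\cdot)\in\mathbb{R}$ determined by the geometry of the subcylinders. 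Call a compact set $\mathcal{L}\subset\mathbb{R}$ \emph{recurrent} if every $\lambda\in\mathcal{L}$ admits at least one such child lying in $\mathcal{L}$. A straightforward inductive descent then shows that every $t$ whose initial discrepancy $\lambda_0$ lies in $\mathcal{L}$ belongs to $\widetilde{K}+K'$; since the set of such $t$ is an interval, this settles the theorem once $\mathcal{L}$ has been built for some $\widetilde{K}\in\mathcal{U}_{\epsilon,K}$.

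The construction of $\mathcal{L}$ proceeds in two steps. First, take $\mathcal{L}_0$ to be a suitable closed interval and show that it is ``almost'' recurrent. Summing the indicator of $\{\text{child lies in }\mathcal{L}_0\}$ over the $|\mathcal{A}|^2$ child maps and integrating against $\mu\ast\mu'$, the assumption that $\mu\ast\mu'$ has $L^2$-density lets us control the concentration of children, while $d+d'>1$ (equivalently $|\mathcal{A}|^2\rho>1$) guarantees that the \emph{average} number of children landing in $\mathcal{L}_0$ exceeds one. Consequently the set of $\lambda\in\mathcal{L}_0$ already possessing a recurrent child in the unperturbed system has full measure, leaving only a thin exceptional set to repair.

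The second step repairs the exceptional set by perturbation. Translating a single cylinder $I(a)$ by $\delta$ shifts each offset $c(a,b)$ by $\rho^{-1}\delta$, so $\epsilon$-small translations of the $|\mathcal{A}|$ cylinders give control over all $|\mathcal{A}|^2$ offsets, subject to the consistency that each $c(a,b)$ is determined by both $I(a)$ and $I'(b)$. For each defective $\lambda\in\mathcal{L}_0$ we designate an intended child $(a,b)$ and ask whether a single $\widetilde{K}\in\mathcal{U}_{\epsilon,K}$ can simultaneously push every intended child into $\mathcal{L}_0$. This is the main obstacle: a translation of $I(a)$ affects every defective $\lambda$ whose intended child uses the letter $a$, so the constraints are coupled across $\mathcal{L}_0$. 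The $L^2$-estimate is what decouples them, via providing enough slack in the counting step to choose intended children whose perturbation constraints are mutually compatible. Once this simultaneous perturbation is achieved, $\mathcal{L}_0$ becomes a genuine recurrent set for $(\widetilde{K},K')$, and the descent argument completes the proof.
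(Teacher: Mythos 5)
Your setup (rescaled relative positions, affine renormalizations with expansion $\rho^{-1}$, recurrent set, descent) matches the paper's Section 2, but both halves of your construction have genuine gaps. First, the claim that since the \emph{average} number of children landing in $\mathcal{L}_0$ exceeds one, ``the set of $\lambda\in\mathcal{L}_0$ already possessing a recurrent child\dots has full measure'' is a first-moment fallacy: expectation $>1$ is perfectly compatible with all children piling up over a small subset of $\lambda$'s, and the $L^2$ bound on $\mu\ast\mu'$ does not upgrade the mean to an almost-everywhere statement. In the paper the $L^2$-density is used for something weaker and different: it shows that most pairs $(a,a')$ are ``good'' (their projected rectangles do not cluster), which yields a set $\mathcal{L}^0(\rho)$ of Lebesgue measure bounded below \emph{uniformly in $\rho$} on which every point admits $N\sim\rho^{-\frac{1}{2}(d+d'-1)}$ ``independent'' two-letter words whose renormalized images merely stay \emph{bounded} (linked), not already inside $\mathcal{L}^0(\rho)$. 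Second, the repair step — the actual heart of the theorem — is only asserted: ``the $L^2$-estimate\dots provid[es] enough slack to choose intended children whose perturbation constraints are mutually compatible'' is precisely what has to be proved, and your one-scale scheme cannot deliver it. With level-one cylinders of length $\rho$, the topology forces translations $\delta\leq\epsilon\rho$, so after the $\rho^{-1}$ rescaling you can move each renormalized child only by $O(\epsilon)$, while a defective point's children may sit at distance of order $1$ from $\mathcal{L}_0$; moreover, because the child maps expand by $\rho^{-1}$, a single intended child can serve only an interval of defective $\lambda$'s of length $O(\rho)$, so you face uncountably many coupled constraints on finitely many parameters with no mechanism to satisfy them.

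The paper resolves exactly these two problems with a two-scale, probabilistic mechanism that is absent from your proposal. The contractions are taken at scale $\rho^{1/2}$, the renormalizations use \emph{two} letters (scale $\rho$), and only the maps indexed by a subalphabet $\mathcal{A}_1$ are translated, by independent random amounts $c_0\rho\,\omega(a)$, $\omega(a)\in[-1,1]$: this is $\epsilon$-small in the topology (relative size $c_0\rho^{1/2}$) yet of order $1$ after the two-level rescaling. The $N$ candidate words for a point of $\mathcal{L}^0(\rho)$ are chosen with mutually distinct first letters in $\mathcal{A}_1$ and second letters in an unperturbed alphabet $\mathcal{A}_2$, so the $N$ renormalized images are independent random variables, each landing in $\mathcal{L}^0(\rho)$ with probability at least $c_3/c_0$; hence the failure probability per point is at most $\exp\bigl(-c_1\rho^{-\frac{1}{2}(d+d'-1)}\bigr)$, which beats a union bound over a $\rho^{5/2}$-dense net of $\mathcal{L}^1(\rho)$ of cardinality $O(\rho^{-5/2})$, and the affine $\rho^{-1}$-expansion converts net-closeness $\rho^{5/2}$ into $\rho^{3/2}<\rho/2$, making the $\rho/2$-neighborhood $\mathcal{L}(\rho)$ a genuine recurrent set containing intervals. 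Without the coarser perturbation scale, the independence structure ($\mathcal{A}_1$ versus $\mathcal{A}_2$, distinct first letters), and the exponential-versus-polynomial counting, your outline does not close.
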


Analogous result holds for sums of Cantor sets with itself.

\begin{thm}\label{thm2}
Let $(K, K) \in \mathcal{Y}$. 
Then, for every $\epsilon > 0$, 
there exists $\widetilde{K} \in \mathcal{H}$ such that 
\begin{itemize} 
\item[(i)] $\widetilde{K}$ is $\epsilon$-close to $K$;
\item[(ii)] $\widetilde{K} + \widetilde{K}$ contains an interval. 
\end{itemize}
\end{thm}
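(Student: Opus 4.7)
The plan is to adapt the proof of Theorem \ref{thm1} to the diagonal situation where both summands are the same perturbed Cantor set. Since the hypothesis $(K,K)\in\mathcal{Y}$ already supplies the two essential ingredients—the $L^2$-density of $\mu\ast\mu$ and the dimension condition $2d>1$—the probabilistic/transversality mechanism from Theorem \ref{thm1} should transfer with only a minor modification, the only new feature being the symmetric coupling between the two copies.

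First I would pass to a deep iteration $\mathcal{A}^n$ of $K$, so that $K$ is generated by the $|\mathcal{A}|^n$ contractions $f_w=f_{w_1}\circ\cdots\circ f_{w_n}$ of common ratio $\rho^n$. This provides a large family of cylinders $\{I(w)\}_{w\in\mathcal{A}^n}$ whose positions can be independently perturbed by small parameters $\{t_w\}_{w\in\mathcal{A}^n}$, staying inside any prescribed neighbourhood of $K$ in $\mathcal{H}$. Writing $\widetilde{K}=\widetilde{K}(t)$ for the resulting homogeneous Cantor set, I would then look at the cylinder decomposition of $\widetilde{K}+\widetilde{K}$: each ordered pair $(w,w')$ contributes a sum-cylinder which is translated by $t_w+t_{w'}$ relative to its unperturbed position. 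Consequently, the relative position between two sum-cylinders indexed by $(w_1,w_2)$ and $(w_3,w_4)$ moves by $t_{w_1}+t_{w_2}-t_{w_3}-t_{w_4}$.

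Next I would run the recurrent-set construction from the proof of Theorem \ref{thm1} in this setting: produce a candidate compact set $\mathcal{L}\subset\mathbb{R}$ of relative positions which is ``very likely'' to be recurrent under the renormalization operator associated to the pair $(\widetilde{K},\widetilde{K})$, and then invoke the $L^2$-density of $\mu\ast\mu$ together with a Kaufman/Solomyak-type averaging argument to show that for a positive-measure set of perturbation vectors $t$ (with $\|t\|$ arbitrarily small) the set $\mathcal{L}$ is really recurrent. As in Theorem \ref{thm1}, recurrence of $\mathcal{L}$ forces $\widetilde{K}(t)+\widetilde{K}(t)$ to contain an interval, which is precisely the conclusion we want.

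The main obstacle, and the only place where Theorem \ref{thm2} is not a verbatim consequence of the proof of Theorem \ref{thm1}, is that the translations of the sum-cylinders in $\widetilde{K}+\widetilde{K}$ are coupled: the shift $t_w+t_{w'}$ is symmetric in $(w,w')$, so the image of the parameter map into $\mathbb{R}^{\mathcal{A}^n\times\mathcal{A}^n}$ has positive codimension. I expect this to be a harmless, essentially cosmetic modification, because the image still has dimension $|\mathcal{A}|^n$, which can be made as large as we please by choosing $n$ big, and this is amply sufficient for the $L^2$-transversality integral that underlies the averaging step. Confirming this quantitatively—i.e.\ that the transversality estimates survive the diagonal identification of the two copies—is the main technical point that distinguishes the proof of Theorem \ref{thm2} from that of Theorem \ref{thm1}.
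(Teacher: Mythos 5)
There is a genuine gap, and it sits exactly at the point you defer. You correctly identify the new feature of the self-sum case -- perturbing $\widetilde{K}$ moves the cylinders in \emph{both} factors of the product, so the shifts of the renormalized relative positions are coupled -- but your proposed resolution (``the image of the parameter map still has dimension $|\mathcal{A}|^n$, which is amply sufficient'') does not address what the argument actually needs. The mechanism behind Proposition \ref{key_lem} is not a dimension count or a transversality integral over the perturbation: it is that for each of the $N$ selected word-pairs $(\underline{b}^i,\underline{b}'^i)$ the renormalized point $t_i(\omega)$ depends on a \emph{single} coordinate $\omega_i$, affinely with slope exactly $c_0$, and on no other coordinate; independence across $i$ then gives the bound $\mathbb{P}(\Omega\setminus\Omega^0(t))\le (1-c_3/2c_0)^N$. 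In the diagonal situation a perturbed letter may occur in the vertical word or in the second slot of either word, in which case $t_i$ depends on several coordinates (destroying the product structure across $i$), the inner rectangle $I(\underline{b}^i)\times I(\underline{b}'^i)$ may move relative to $I(a^i_1)\times I(a'^i_1)$, and the effect of $\omega$ on the relative position can degenerate altogether. Nothing in your proposal rules this out; ``confirming this quantitatively'' is precisely the content of the theorem beyond Theorem \ref{thm1}.

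The paper resolves this combinatorially rather than by enlarging the parameter space: it modifies the notion of independent word-pairs so that $a^i_1$ lies in a perturbed alphabet $\tilde{\mathcal{A}}_1$ while $a'^i_1, a^i_2, a'^i_2$ all lie in a disjoint unperturbed alphabet $\tilde{\mathcal{A}}_2$, so the outer rectangle moves only horizontally and only through $\omega_{a^i_1}$, restoring the one-coordinate dependence and the independence over $i$. To keep the construction of $\mathcal{L}^0(\rho)$ (and the lower bound $|\mathcal{L}^0(\rho)|\ge c_3$) alive under this restriction, it proves a counting lemma: using $|\mathcal{G}|\ge\frac{15}{16}|\mathcal{B}|$ one can choose disjoint $\tilde{\mathcal{A}}_1,\tilde{\mathcal{A}}_2\subset\mathcal{A}$ with $|\mathcal{G}^{(\ell m)}|\ge\frac{3}{64}|\mathcal{G}|$ for all $1\le \ell,m\le 2$, where $\mathcal{G}^{(\ell m)}$ collects good pairs with first letter in $\tilde{\mathcal{A}}_\ell$ and second in $\tilde{\mathcal{A}}_m$. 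Your proposal contains no analogue of either ingredient, so as written it does not yield Theorem \ref{thm2}.
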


We also obtain a similar result for self-similar sets with overlaps.  
The proof is essentially the same as Theorem \ref{thm1}. 

\begin{dfn}
Let $K \in \mathcal{H}_0$, and let $\{f_a\}_{a \in \mathcal{A}}$ be a set of contractions which generates $K$. 
Let $0 < r_a < 1$ be the contracting ratio of $f_a$, and let $s$ be the unique solution of $\sum_{a \in \mathcal{A}} r^s_a = 1$. 
We call a probability measure $\mu$ the \emph{uniform self-similar measure} if $\mu$ satisfies 
\begin{equation*}
\mu = \sum_{a \in \mathcal{A}} r^s_a f_a \mu. 
\end{equation*}
\end{dfn}

\begin{rem}
As with Definition \ref{selfsimilar}, 
The above definition is not the most standard. 
Uniform self-similar measure is normally defined for a set $K$ together with a set of contracting maps which generates $K$. 
\end{rem}

Then we have the following: 

\begin{thm}\label{thm3}
Let $K \in \mathcal{H}_0$, and let us assume that the uniform self-similar measure of $K$ has $L^2$-density. 
Then, for every $\epsilon > 0$, there exists $\widetilde{K} \in \mathcal{H}_0$ such that 
\begin{itemize} 
\item[(i)] $\widetilde{K}$ is $\epsilon$-close to $K$;
\item[(ii)] $\widetilde{K}$ contains an interval. 
\end{itemize}
\end{thm}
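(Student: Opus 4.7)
The plan is to run the argument used for Theorem~\ref{thm1} applied directly to a single self-similar system, using the observation noted in the introduction that the sum $K + K'$ of two homogeneous Cantor sets of common ratio $\rho$ is itself a self-similar set with overlaps whose uniform self-similar measure is the convolution $\mu \ast \mu'$. Under this identification, the $L^2$-density assumption in the definition of $\mathcal{Y}$ becomes exactly the $L^2$-density hypothesis of Theorem~\ref{thm3}, and the creation of an interval in the sumset becomes the creation of an interval inside the self-similar set with overlaps.

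First I would carry out a preliminary reduction to the homogeneous case. By an arbitrarily small initial adjustment of the contracting ratios of the generators of $K$, as in the remarks following the definition of $\mathcal{Y}$, one may assume the logarithms $\log r_a$ are pairwise rationally commensurable. Taking a sufficiently high iterate of the IFS then yields a homogeneous subsystem with common contraction ratio $\rho > 0$ (which may be taken arbitrarily small) generating the same set; the uniform self-similar measure is unchanged and remains $L^2$.

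With this in hand I would repeat the construction of Theorem~\ref{thm1}. The space of relative positions between pairs of overlapping cylinders $I(\underline{a})$ and $I(\underline{b})$ of equal depth is a compact subset of $\mathbb{R}$ on which the renormalization operator acts, and the $L^2$-bound on the uniform self-similar measure is used, via Plancherel estimates, to produce a set that is \emph{very likely} to be a recurrent set. A perturbation of the translation parameters $t_a$ of size at most $\epsilon$ then turns this candidate into an honest recurrent set $\mathcal{L}$ for some nearby $\widetilde{K} \in \mathcal{H}_0$, and the existence of $\mathcal{L}$ forces $\widetilde{K}$ to contain an interval.

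The main obstacle is the loss of freedom coming from having only one self-similar set rather than two independent Cantor sets. In the proof of Theorem~\ref{thm1} the two families $\{t_a\}$ and $\{t'_a\}$ of translations may be perturbed independently, giving a large-dimensional parameter space of perturbations of the relative position. Here only one family $\{t_a\}$ is available, so one must verify that the induced perturbations of the relative translations between distinct pairs of deep cylinders are independent enough to drive the averaging step. Following \cite{Moreira3} and the proof of Theorem~\ref{thm1}, this reduces to the linearity of the perturbation in the $t_a$ combined with the abundance of overlapping pairs at deep scales guaranteed by the $L^2$-hypothesis on the uniform self-similar measure.
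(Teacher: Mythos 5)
Your treatment of the homogeneous case is in the same spirit as the paper's: there the paper simply observes that $\pi(K\times K')$ is a homogeneous self-similar set with uniform measure $\pi(\mu\times\mu')$, so the proof of Theorem \ref{thm1} can be repeated verbatim for a homogeneous self-similar set with overlaps, perturbing the translation parameters of a subfamily $\mathcal{A}_1$ of the maps. Your concern about having only one family of translations is legitimate but is resolved exactly as in the paper (cf.\ the independence condition used for Theorem \ref{thm2}), so that part is fine.

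The genuine gap is in your preliminary reduction of the non-homogeneous case to the homogeneous one. First, ``an arbitrarily small initial adjustment of the contracting ratios'' is not an admissible perturbation here: in the topology the paper puts on $\mathcal{H}_0$, $\epsilon$-closeness requires $|I(a)|=|\widetilde I(a)|$ for all $a$, i.e.\ the contraction ratios must be preserved; only translations may be moved. Moreover, changing the ratios changes the attractor and its uniform self-similar measure, so the assertion that ``the uniform self-similar measure is unchanged and remains $L^2$'' is unjustified --- the $L^2$-density hypothesis is a property of the given system and is not known to survive a perturbation of the ratios. Second, even granting commensurable ratios $r_a=\rho_0^{n_a}$, taking a high iterate does not produce a homogeneous system generating the same set: any cut-set of the word tree contains words whose cumulative ratios differ by bounded powers of $\rho_0$ (e.g.\ with ratios $\rho_0,\rho_0^2$ one unavoidably mixes $\rho_0^m$ and $\rho_0^{m+1}$), and restricting to the words of one exact ratio generates only a proper subsystem whose natural measure is no longer the one assumed to be $L^2$. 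The paper avoids this entirely: for the non-homogeneous case it does \emph{not} reduce to the homogeneous setting, but chooses for each small $\rho$ a generating family whose contraction ratios are merely comparable to $\rho$ up to a uniform constant $c_{11}$, and runs the same translation-perturbation/recurrent-set argument directly on that family, since the estimates only require comparability of scales, not equality. To repair your proof you should drop the commensurability/iteration step and argue as the paper does with a cut-set system of comparable ratios.
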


\subsection{Structure of the paper}
We rely heavily on the techniques invented in \cite{Moreira3}. 
Sections 2, 3, 4 and 5 in this paper correspond to the sections 2, 3, 4 and 5 in \cite{Moreira3}. 
We describe the basic ideas of the proof in section \ref{recurrence}. 
The outline of the proof of Theorem \ref{thm1} is given in section \ref{outline}. 
In section \ref{construct} we will construct the sets $\mathcal{L}^0(\rho)$ for any $\rho > 0$ which are candidates of a recurrent set, and show that 
the Lebesgue measure of $\mathcal{L}^0(\rho)$ are bounded away from zero uniformly in $\rho > 0$.  
In section \ref{key_prop} we will prove the key proposition, which claims that with ``very high probability" any point in the set $\mathcal{L}^0(\rho)$  
can return to itself by an action of a renormalization operator. 
Theorem \ref{thm2} and \ref{thm3} will be proven in section \ref{proof2} (the proof is very similar to Theorem \ref{thm1}).

\section{Intersections of homogeneous Cantor sets and recurrent sets}\label{recurrence}

\subsection{Differences of Cantor sets}
Let $K$, $K'$ be homogeneous Cantor sets. Since $K + K' = K - (-K')$ and $-K'$ is again a homogeneous Cantor set, 
from below we consider only differences of homogeneous Cantor sets, instead of sums. 

Notice that, since 
\begin{equation*}
t \in K - K' \iff K \cap ( K' + t ) \neq \phi, 
\end{equation*}
$K - K'$ contains an interval if and only if there exists an interval $J \subset \mathbb{R}$ such that 
\begin{equation*}
K \cap ( K' + t ) \neq \phi \, \text{ for all } t \in J. 
\end{equation*}

\subsection{Recurrent sets}
Throughout this section, we fix $(K, K') \in \mathcal{Y}$  
and associated $\rho$-contractions $\mathcal{F} = \{ f_a \}_{a \in \mathcal{A}}$ and 
$\mathcal{F}' = \{ f'_{a'} \}_{a' \in \mathcal{A}' }$. 
Without loss of generality, we can assume that $\mathrm{con}(K) = [0, 1]$ and $\mathrm{con}(K') = [0, s_0]$. 

Let $\mathcal{P}$ (resp. $\mathcal{P}'$) be the set of all affine maps with positive linear coefficient  
defined on $\mathrm{con}(K)$ (resp. $\mathrm{con}(K')$). 
We call a pair $(h, h') \in \mathcal{P} \times \mathcal{P}'$ a \emph{configuration} of $K$, $K'$ 
if $h$ and $h'$ have the same linear coefficient. 
We define a equivalence relation on the set of configurations in the following way: 
\begin{equation*}
\begin{aligned}
&(h_1, h_1') \sim  (h_2, h'_2) \\
\iff &\text{there exists an affine map $g$ such that } g \circ h_1 = h_2 \text{ and } g \circ h'_1 = h'_2. 
\end{aligned}
\end{equation*}
Let $Q$ be the quotient of the configurations by this equivalence relation. 
We call an element of $Q$ a \emph{relative configuration} of $K$, $K'$.

Let $u \in Q$, and let 
$(h, h') \in \mathcal{P} \times \mathcal{P}'$ be the configuration such that $u = [(h, h')]$ and 
$h(\mathrm{con}(K)) = [0, 1]$. Consider the map  
\begin{equation}\label{identification0}
\begin{aligned}
Q &\to \mathbb{R} \\
u &\mapsto h'(K'^L)
\end{aligned}
\end{equation}
where $K'^L$ is the left endpoint of $K'$. It is easy to see that this map is a bijection. 
By abuse of notation, from below we simply denote $h'(K'^L)$ by $u$. 
For $a \in \mathcal{A}$ and $a' \in \mathcal{A}'$, we define a map $T_{a} T'_{a'}(\cdot): Q \to Q$ by 
\begin{equation}\label{renormalization}
T_{a} T'_{a'} ( [(h, h')] ) = [ ( h \circ f_{a}, h' \circ f'_{a'} ) ], 
\end{equation}
and call this map $T_{a} T'_{a'}(\cdot)$ a \emph{renormalization operator}.  
Similarly, for $a_1, a_2 \in \mathcal{A}$ and $a'_1, a'_2 \in \mathcal{A}'$, we define a map $T_{a_1 a_2} T'_{a'_1 a'_2}(\cdot): Q \to Q$ by 
\begin{equation*}
T_{a_1 a_2} T'_{a'_1 a'_2} ( [(h, h')] ) = [ ( h \circ f_{a_1} \circ f_{a_2}, h' \circ f'_{a'_1} \circ f'_{a'_2} ) ], 
\end{equation*}
and call this also a renormalization operator. 
Note that we have $T_{a_1 a_2} T'_{a'_1 a'_2} = ( T_{a_2} T'_{a'_2} ) \circ ( T_{a_1} T'_{a'_1} )$. 

\begin{rem}
We can naturally define renormalization operator for any pair of words 
$\underline{a} = a_1 a_2 \cdots a_n$ and $\underline{a}' = a'_1 a'_2 \cdots a'_n$ with $n \geq 3$, 
but for our purpose the above definition suffices. 
\end{rem}

\begin{rem}
Let $u \in Q$, and let $(h, h')$ be a configuration such that $u = [(h, h')]$. 
Then $u$ represents the ``relative position" of two Cantor sets $h(K)$ and $h'(K')$. 
For any $a \in \mathcal{A}$ (resp. $a' \in \mathcal{A}'$), 
the Cantor set $h \circ f_a (K)$ (resp. $h' \circ f'_{a'}(K')$) is a subset of the Cantor set $h(K)$ (resp. $h'(K')$) 
which is associated to the word $a$ (resp. $a'$). The real number $T_a T'_{a'}(u)$ 
represents the ``relative position" of these two Cantor sets. 
\end{rem}


\begin{dfn}
Let $u \in Q$, and let $(h, h')$ be a configuration such that $u = [(h, h')]$. Then we say that $u$ is 
\begin{itemize}
\item[--] \emph{intersecting} if $h(K) \cap h'(K') \neq \phi$; 
\item[--] \emph{linked} if $h( \mathrm{con}(K) ) \cap h'( \mathrm{con}(K') ) \neq \phi$. 
\end{itemize} 
\end{dfn}
Recall that we identified $Q$ with $\mathbb{R}$ by (\ref{identification0}). 
Note that, if $u \in Q$ is linked, then $|u| \leq \max\{ 1, s_0 \} < 1 + s_0$.  
 
\begin{lem}\label{crucial}
Let $u \in Q$. Then $u$ is intersecting if and only if the following holds: 
there exist $M > 0$ and $a_i \in \mathcal{A}, \, a'_i \in \mathcal{A}' \ (i = 1, 2, \cdots)$ such that 
$| u_i  | < M \ (i = 0, 1, 2, \cdots)$, where $u_i  \ (i = 0, 1, \cdots)$ is a sequence defined recursively by 
\begin{equation}\label{u}
u_0 = u, \ u_{i} = T_{a_i} T'_{a'_i} u_{i-1}.
\end{equation} 
\end{lem}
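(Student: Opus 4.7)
The plan is to unravel the definitions: the number $u_i \in \mathbb{R}$ attached to the relative configuration $[(h_i,h'_i)]$ is, after normalizing so $h_i(\mathrm{con}(K)) = [0,1]$, the position of the left endpoint of the translated Cantor set $h'_i(K')$. So bounded $|u_i|$ is saying that at every scale $\rho^i$, the descending copies $h_i(\mathrm{con}(K))$ and $h'_i(\mathrm{con}(K'))$ are at distance $O(\rho^i)$ from each other.

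For the \emph{forward} direction I would pick a point $x \in h(K) \cap h'(K')$. Using $K = \bigcup_{a} f_a(K)$ and $K' = \bigcup_{a'} f'_{a'}(K')$, I can inductively select addresses $a_i \in \mathcal{A}$, $a'_i \in \mathcal{A}'$ so that $x \in (h \circ f_{a_1} \circ \cdots \circ f_{a_i})(K)$ and $x \in (h' \circ f'_{a'_1} \circ \cdots \circ f'_{a'_i})(K')$. Writing $(h_i,h'_i)$ for the corresponding configuration, the point $x$ lies in both $h_i(\mathrm{con}(K))$ and $h'_i(\mathrm{con}(K'))$. Rescaling by the affine map $g$ that sends $h_i(\mathrm{con}(K))$ to $[0,1]$, the rescaled point $g(x)$ lies in $[0,1] \cap (K' + u_i)$, forcing $u_i \in [-s_0, 1]$. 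Thus $|u_i| \leq 1 + s_0$ for every $i$, and $M = 1 + s_0$ works.

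For the \emph{backward} direction, I would track the two nested sequences of intervals $h_i(\mathrm{con}(K)) = [\ell_i, \ell_i + \rho^i]$ and $h'_i(\mathrm{con}(K')) = [\ell'_i, \ell'_i + \rho^i s_0]$. By the identification (\ref{identification0}), the offset satisfies $\ell'_i - \ell_i = \rho^i u_i$, so the hypothesis $|u_i| < M$ says $|\ell_i - \ell'_i| \leq \rho^i M$. The two nested sequences of closed intervals shrink to unique points $x \in h(K)$ and $x' \in h'(K')$ respectively. Then
\begin{equation*}
|x - x'| \leq |x - \ell_i| + |\ell_i - \ell'_i| + |\ell'_i - x'| \leq \rho^i(1 + M + s_0),
\end{equation*}
and letting $i \to \infty$ gives $x = x' \in h(K) \cap h'(K')$.

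The proof is essentially a book-keeping exercise; the only real obstacle is notational, namely to be careful with the identification $Q \cong \mathbb{R}$ and to verify that after passing to the representative $(h,h')$ with $h(\mathrm{con}(K)) = [0,1]$, the coefficient of $h'$ is automatically $1$ (since configurations have matching linear coefficients), so that $h'(K')$ is genuinely a translate of $K'$ by the scalar $u$. Once this identification is in hand, both implications reduce to the elementary estimates sketched above.
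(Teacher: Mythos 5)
Your proof is correct and follows essentially the same argument as the paper: the forward direction bounds $|u_i|$ by observing the renormalized configurations remain linked (giving $|u_i|\leq\max\{1,s_0\}<1+s_0$), and the reverse direction tracks the nested intervals whose offsets are $\rho^i u_i$. The only cosmetic difference is that the paper proves the reverse implication in contrapositive form (non-intersecting forces $|u_i|\to\infty$ for every choice of addresses), while you argue directly that boundedness forces the two limit points to coincide; the estimate is the same.
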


\begin{proof}
Assume first that $u$ is intersecting. Let $(h, h')$ be a configuration such that $u = [(h, h')]$. 
Let $x, x' \in \mathbb{R}$ be such that $h(x) = h'(x')$, and let 
$a_i \in \mathcal{A}, \, a'_i \in \mathcal{A}' \ (i = 1, 2, \cdots)$ be the sequences such that 
\begin{equation*}
x = \bigcap_{i=1}^{\infty} f_{a_1} \circ \cdots \circ f_{a_i} (K) \text{ \ and \ } 
x' = \bigcap_{i=1}^{\infty} f'_{a'_1} \circ \cdots \circ f'_{a'_i} (K'). 
\end{equation*}
Define $\{ u_i \}$ by (\ref{u}). Since $u_i$'s are intersecting, they are linked. Therefore, we have 
$| u_i  | < 1 + s_0$. 

Assume next that $u$ is not intersecting. Let us take $a_i \in \mathcal{A}, \, a'_i \in \mathcal{A}' \ (i = 1, 2, \cdots)$, and let $\{ u_i \}$ be the sequence 
defined by (\ref{u}). Let $(h, h')$ be a configuration such that $u = [(h, h')]$. 
Note that $u_i = [ (h_i, h'_i) ]$, where $h_i = h \circ f_{a_1} \circ \cdots \circ f_{a_i}$ and 
$h'_i = h' \circ f'_{a'_1} \circ \cdots \circ f'_{a'_i}$. 
Write 
\begin{equation*}
x = \bigcap_{i=1}^{\infty} f_{a_1} \circ \cdots \circ f_{a_i} (K) \text{ \ and \ } 
x' = \bigcap_{i=1}^{\infty} f'_{a'_1} \circ \cdots \circ f'_{a'_i} (K'). 
\end{equation*}
Then, $h(x) \in h_i( \mathrm{con}(K) )$ and $h'(x') \in h'_i( \mathrm{con}(K') )$. Since $h(x) \neq h'(x')$ and 
 $\lim_{i \to \infty} | h_i( \mathrm{con}(K))| = 0,  \lim_{i \to \infty} | h'_i( \mathrm{con}(K') ) | = 0$, 
 we obtain $\lim_{i \to \infty} | u_i  | = \infty$. 
\end{proof}

The above lemma leads to the following definition. 
Moreira and Yoccoz used this idea in \cite{Moreira3} to consider stable intersections of dynamically defined (nonlinear) Cantor sets. 
See also \cite{Honary}, \cite{Pourbarat}, \cite{Pourbarat2}. 

\begin{dfn}
We call a compact set $\mathcal{L}$ in $Q$ a \emph{recurrent set} 
if for every $u \in \mathcal{L}$, 
there exist $a \in \mathcal{A}$ and $a' \in \mathcal{A}'$  
such that $T_{a} T'_{a'} u$ belongs to $\mathrm{int} \mathcal{L}$. 
\end{dfn}

Lemma \ref{crucial} implies the following: 

\begin{prop}
If a recurrent set contains an interval, then $K - K'$ contains an interval. 
\end{prop}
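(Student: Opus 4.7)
The plan is to show that every point of a recurrent set is intersecting, and then translate this back through the identification $Q \cong \mathbb{R}$ into membership in $K - K'$. So the work really is to package Lemma \ref{crucial} appropriately.

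First, I would use that $\mathcal{L}$ is compact to choose $M > 0$ with $|u| \le M$ for all $u \in \mathcal{L}$. Then, given any $u \in \mathcal{L}$, I would inductively construct symbol sequences $a_i \in \mathcal{A}$, $a'_i \in \mathcal{A}'$ as follows: by the recurrence property applied to $u_0 := u$, there exist $a_1, a'_1$ with $u_1 := T_{a_1} T'_{a'_1} u_0 \in \mathrm{int}\,\mathcal{L}$; applying the property again to $u_1$ produces $a_2, a'_2$ with $u_2 \in \mathrm{int}\,\mathcal{L}$; and so on. Because each $u_i$ lies in $\mathcal{L}$, we have $|u_i| \le M$ for every $i$. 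Lemma \ref{crucial} then implies that $u$ is intersecting.

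Next, I would unwind the identification (\ref{identification0}) to interpret "intersecting". Since $\mathrm{con}(K) = [0,1]$ was normalized, the distinguished representative $(h, h')$ of $u$ has $h = \mathrm{id}$, and because $h$ and $h'$ share the same linear coefficient (namely $1$), the map $h'$ must be a translation $x \mapsto x + u$ (using $K'^L = 0$ since $\mathrm{con}(K') = [0, s_0]$). Hence $u$ intersecting means precisely $K \cap (K' + u) \neq \emptyset$, i.e., $u \in K - K'$. Combining with the previous paragraph, $\mathcal{L} \subset K - K'$, so if $\mathcal{L}$ contains an interval, then so does $K - K'$.

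There is no real obstacle here; the argument is essentially bookkeeping once Lemma \ref{crucial} is in hand. The only point that requires a bit of care is making sure the iterated renormalization actually stays inside $\mathcal{L}$ (which is immediate from $\mathrm{int}\,\mathcal{L} \subset \mathcal{L}$), and that the normalization of the representative $(h, h')$ makes the identification $u \mapsto h'(K'^L)$ agree with the translation parameter used in $K - K'$.
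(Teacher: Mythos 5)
Your argument is correct and is exactly the route the paper intends: the proposition is stated as an immediate consequence of Lemma \ref{crucial}, obtained by iterating the recurrence property to keep the orbit $\{u_i\}$ bounded inside the compact set $\mathcal{L}$ and then reading off, via the identification (\ref{identification0}), that every intersecting $u$ lies in $K - K'$. No gap; the only cosmetic point is to take the bound strictly (e.g.\ $M+1$) to match the strict inequality in Lemma \ref{crucial}.
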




\subsection{Geometrical interpretation}
Let us denote the line $y = -x$ by $\ell$, and let $\pi$ be the orthogonal projection of $\mathbb{R}^2$ onto $\ell$. 
We parametrize $\ell$ by  
\begin{equation}\label{confusing}
\mathbb{R} \ni x \mapsto x 
\begin{pmatrix}
\frac{1}{2} \\
-\frac{1}{2}
\end{pmatrix}. 
\end{equation}
From below we always assume that $\ell$ has this parametrization. 
It is easy to see that 
$\pi ( K \times K' ) = K - K'$ under this identification.

Assume that we are given $(K, K') \in \mathcal{Y}$. 
Given $(x, x') \in K \times K'$, there is a unique relative configuration $u = [(h, h')]$ such that 
\begin{equation*}
h( x ) = h'( x' ). 
\end{equation*}
It is easy to see that the coordinate of this configuration $u$ is given by $x - x'$.

Let $a \in \mathcal{A}, a' \in \mathcal{A}'$, and consider $K \times K'$ in $\mathbb{R}^2$. 
Assume that $u \in Q$ is a relative configuration. 
Denote by $\ell_u$ the line which is perpendicular to $\ell$ and passes through $u \in \ell$ 
(we identified $u \in Q$ with a point in $\ell$ by (\ref{confusing})). 
Then,  
\begin{equation*}
\ell_u \cap ( I(a) \times I'(a') ) \neq \phi \iff u' \text{ is linked},
\end{equation*}
Where $u' = T_{a} T'_{a'}(u)$. See Figure \ref{geom00}. The coordinate of $u'$ can be seen graphically by considering the ``relative position between  
the rectangle $I(a) \times I'(a')$ and the line $\ell_u$". To be more precise, 
let us consider the affine transformation which sends the rectangle $I(a) \times I'(a')$ to the rectangle $\mathrm{con}(K) \times \mathrm{con}(K')$. 
This transformation sends the point $p$ in Figure \ref{geom00} to a point on the line $\ell$. This point agrees with $u'$ 
(under the identification (\ref{confusing})). 

Let us denote by $\mu_{K, K'}$ the push-forward of $\mu \times \mu'$ under the map $\pi$. 
Since we consider differences of Cantor sets, instead of $\mu \ast \mu'$ we need to assume that $\mu_{K, K'}$ has $L^2$-density. 
Notice that $\mu \times \mu'$ agrees with the self-similar probability measure associated to the set of contractions 
$\{f_a \times f'_{a'} \}_{a \in \mathcal{A}', a' \in \mathcal{A}'}$ on $\mathbb{R}^2$.

\begin{centering}
\begin{figure}[t]
\includegraphics[scale=1.00]{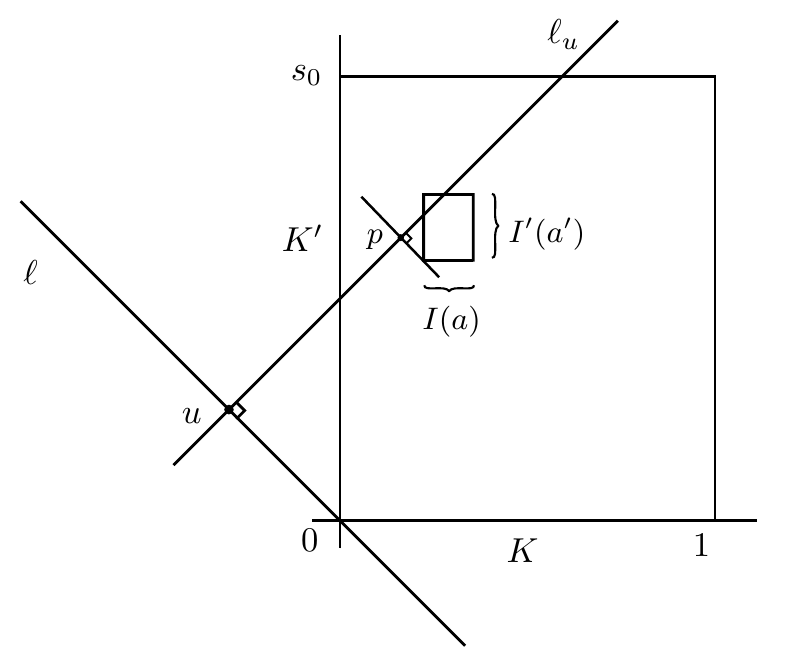}
\caption{}
\label{geom00}
\end{figure}
\end{centering}

\section{Outline of the proof of the main theorem}\label{outline}

\subsection{Perturbation}
In this section, we discuss outline of the proof of Theorem \ref{thm1}. 
Assume that we are given $(K, K') \in \mathcal{Y}$.  
Let $\mathcal{F} = \{ f_a \}_{a \in \mathcal{A}}$ (resp. $\mathcal{F}' = \{ f'_{a'} \}_{a' \in \mathcal{A}' } $) be a set of 
$\rho^{1/2}$-contractions associated to $K$ (resp. $K'$). 
We assume that $\rho > 0$ is sufficiently small, and 
$\mathrm{con}(K) = [0, 1]$ and $\mathrm{con}(K') = [0, s_0]$. 

In section \ref{construct} we will define disjoint subsets $\mathcal{A}_1$, $\mathcal{A}_2 \subset \mathcal{A}$. We only perturb the elements of the Markov partition 
associated to $\mathcal{A}_1$. Let $\Omega = [-1, 1]^{\mathcal{A}_1}$. 
For $\underline{\omega} \in \Omega$,  we define 
\begin{equation*}
\mathcal{F}^{\underline{\omega}} = \{ f^{\underline{\omega}}_a \}_{a \in \mathcal{A}}, 
\end{equation*}
a set of $\rho^{1/2}$-contractions, in the following way:  
\begin{equation*}
f^{\underline{\omega}}_a (x) = 
\begin{cases}
f_a(x) & \text{ if \ } a \in \mathcal{A} \setminus \mathcal{A}_1  \\
f_a(x) + c_0 \rho \, \underline{\omega}(a) & \text{ if \ } a \in \mathcal{A}_1, 
\end{cases}
\end{equation*}
where $c_0$ is a sufficiently large constant, to be chosen later. 
Let $K^{\underline{\omega}}$ 
be the homogeneous Cantor set associated to $\mathcal{F}^{\underline{\omega}}$. 
Recall that we defined the renormalization operators in (\ref{renormalization}). 
We define the renormalization operators 
$T^{\underline{\omega}}_{a} T'_{a'}(\cdot) \ (a \in \mathcal{A}, a' \in \mathcal{A}')$ 
in analogous way. 

\begin{rem}
In the proof we use constants $c_{k} \ (k = 0, 1, \cdots, 11)$. 
They may depend on each other but can be taken independently of $\rho > 0$. 
\end{rem}

\subsection{Outline of the proof}

In section \ref{construct}, we will construct a nonempty bounded set $\mathcal{L}^0(\rho) \subset \mathbb{R}$. 
Let $\mathcal{L}^1(\rho)$ (resp. $\mathcal{L}(\rho)$) be the $\rho$ (resp. $\rho/2$) neighborhood of $\mathcal{L}^0(\rho)$.  
We show that for some $\underline{\omega} \in \Omega$, $\mathcal{L}(\rho)$ is a recurrent set for $(K^{\underline{\omega}}, K')$. 

For $t \in \mathcal{L}^1(\rho)$, we define $\Omega^0(t) \subset \Omega$ to be the set of all 
$\underline{\omega} \in \Omega$ such that the following holds: 
there exist $a_1$, $a_2 \in \mathcal{A}$ and 
$a'_1$, $a'_2 \in \mathcal{A}'$, and the image 
\begin{equation*}
T^{\underline{\omega}}_{a_1 a_2} T'_{a'_1 a'_2}(t) 
= \hat{t}
\end{equation*}
satisfies $\hat{t} \in \mathcal{L}^0(\rho)$. 
The following crucial estimate will be proven in section \ref{key_prop}. 

\begin{prop}\label{key_lem}
There exists $c_1 > 0$ such that for any $t \in \mathcal{L}^1(\rho)$, 
\begin{equation*}
\mathbb{P} \left( \Omega \setminus \Omega^0(t) \right) \leq \exp \left( -c_1 \rho^{ -\frac{1}{2} (d + d' - 1) } \right). 
\end{equation*}
\end{prop}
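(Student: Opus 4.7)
The plan is a standard large-deviation bound over the independent coordinates $\{\underline{\omega}(a)\}_{a \in \mathcal{A}_1}$. I will exhibit $N \gtrsim \rho^{-(d+d'-1)/2}$ pairwise independent events $E_1,\dots,E_N \subseteq \Omega$, each satisfying $\mathbb{P}(E_k) \geq p > 0$ for a constant $p$ independent of $\rho$, with $\bigcup_k E_k \subseteq \Omega^0(t)$. A product bound then gives
\begin{equation*}
  \mathbb{P}\bigl(\Omega \setminus \Omega^0(t)\bigr) \leq \mathbb{P}\Bigl(\bigcap_{k=1}^{N} E_k^c\Bigr) = \prod_{k=1}^{N}\bigl(1-\mathbb{P}(E_k)\bigr) \leq (1-p)^{N} \leq e^{-pN},
\end{equation*}
which yields the required estimate.

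First I would unwind $T^{\underline{\omega}}_{a_1 a_2}T'_{a'_1 a'_2}$ explicitly. Using the normalization $\mathrm{con}(K)=[0,1]$, the definitions of the renormalization operators and of $f^{\underline{\omega}}_a$ give the affine formula
\begin{equation*}
  T^{\underline{\omega}}_{a_1 a_2}T'_{a'_1 a'_2}(t) = T_{a_1 a_2}T'_{a'_1 a'_2}(t) - c_0\,\underline{\omega}(a_1)\mathbf{1}_{\mathcal{A}_1}(a_1) - c_0 \rho^{1/2}\,\underline{\omega}(a_2)\mathbf{1}_{\mathcal{A}_1}(a_2),
\end{equation*}
so only $\underline{\omega}(a_1)$ contributes a shift of order $1$; the $\underline{\omega}(a_2)$ term is $O(\rho^{1/2})$ and fits inside the width-$\rho$ cushion $\mathcal{L}^1(\rho) \setminus \mathcal{L}^0(\rho)$. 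Next I would count first-level crossings. Reading $t$ through the geometric picture $\pi(K\times K') = K-K'$ of Section~\ref{recurrence}, the $L^2$-density of $\mu_{K,K'}$ (condition (i) of $\mathcal{Y}$, via Kaufman-type thickening around the line $\ell_t$) produces at least $c\,\rho^{-(d+d'-1)/2}$ pairs $(a_1,a'_1)$ for which $\ell_t$ crosses $I(a_1)\times I'(a'_1)$. The construction of $\mathcal{A}_1 \subseteq \mathcal{A}$ in Section~\ref{construct} should ensure that a fixed positive fraction of these pairs has $a_1 \in \mathcal{A}_1$, and since each $a_1$ admits only $O(1)$ linked partners $a'_1$ (each $I'(a'_1)$ having length $\rho^{1/2}$), I may extract distinct letters $a_1^{(1)},\dots,a_1^{(N)} \in \mathcal{A}_1$ with $N \geq c'\,\rho^{-(d+d'-1)/2}$ and accompanying partners $a_1'^{(k)}$ forming linked first-level configurations.

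Define $E_k$ to be the event that some $(a_2, a'_2)$ satisfies $T^{\underline{\omega}}_{a_1^{(k)} a_2}T'_{a'_1{}^{(k)} a'_2}(t) \in \mathcal{L}^0(\rho)$. By the affine formula above, $E_k$ depends only on $\underline{\omega}(a_1^{(k)})$ up to the absorbable $O(\rho^{1/2})$ error, so distinctness of the $a_1^{(k)}$ yields mutual independence. For the lower bound $\mathbb{P}(E_k) \geq p$, as $\underline{\omega}(a_1^{(k)})$ ranges over $[-1,1]$ the perturbed target sweeps an interval of length $2c_0$; provided some $(a_2,a'_2)$ places the unperturbed $T_{a_1^{(k)} a_2}T'_{a'_1{}^{(k)} a'_2}(t)$ within $c_0$ of $\mathcal{L}^0(\rho)$, this sweeping interval overlaps $\mathcal{L}^0(\rho)$ on a set of $\underline{\omega}(a_1^{(k)})$-measure at least $|\mathcal{L}^0(\rho)|/(2c_0) \geq p > 0$, bounded below uniformly in $\rho$ by the Lebesgue estimate on $|\mathcal{L}^0(\rho)|$ promised in Section~\ref{construct}. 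The main obstacle is precisely this proviso: one must verify that \emph{every} linked first-level configuration admits some second-level renormalization whose unperturbed image lands within $c_0$ of $\mathcal{L}^0(\rho)$. This is a structural property of $\mathcal{L}^0(\rho)$ to be designed into its construction, and is exactly what justifies the introduction's description of $\mathcal{L}^0(\rho)$ as a ``very likely'' recurrent set.
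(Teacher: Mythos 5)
Your large--deviation skeleton (exhibit $N\sim\rho^{-\frac12(d+d'-1)}$ chances tied to distinct coordinates of $\underline{\omega}$, show each succeeds with probability at least roughly $|\mathcal{L}^0(\rho)|/2c_0\geq c_3/2c_0$, then use $(1-p)^N\leq e^{-pN}$) is exactly the paper's, and your sweep computation $t_i=c_0\underline{\omega}(a_1)+\mathrm{const}$ is correct. But the step that actually supplies the $N$ independent chances for a \emph{given} $t\in\mathcal{L}^1(\rho)$ is missing, and your substitute for it does not work. You try to obtain, pointwise for every such $t$, at least $c\,\rho^{-\frac12(d+d'-1)}$ first-level pairs $(a_1,a_1')$ met by $\ell_t$ directly from the $L^2$-density of $\mu_{K,K'}$; the $L^2$ hypothesis, however, only yields averaged (measure) information --- in the paper it is used once, in Lemma \ref{bad_good} and the proof of Proposition \ref{nice_estimate}, to show that the \emph{set} of favorable $t$ has measure at least $c_3$, not that every $t\in\mathcal{L}^1(\rho)$ sees many rectangles. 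The paper gets the $N$ chances for free from the definition of $\mathcal{L}^0(\rho)$: choose $\tilde{t}\in\mathcal{L}^0(\rho)$ with $|t-\tilde{t}|<\rho$; by definition there are $N=c_2^2\rho^{-\frac12(d+d'-1)}$ independent pairs $(\underline{b}^i,\underline{b}'^i)$ with $|T_{\underline{b}^i}T'_{\underline{b}'^i}(\tilde{t})|\leq 1+s_0$, and since a length-two renormalization expands by $\rho^{-1}$, $|T_{\underline{b}^i}T'_{\underline{b}'^i}(t)|\leq 2+s_0$. This boundedness is all that is needed: for $c_0$ large the sweep of length $2c_0$ then covers all of $[-(1+s_0),1+s_0]\supset\mathcal{L}^0(\rho)$. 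What you flag at the end as an unproven ``proviso'' to be designed into $\mathcal{L}^0(\rho)$ later is thus not an extra structural property at all --- it \emph{is} the definition of $\mathcal{L}^0(\rho)$ (plus the $\rho$-approximation by $\tilde{t}$), and your argument never invokes it; as written, the key estimate is left unproved.

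A second genuine flaw is the treatment of the second letter. You allow $a_2\in\mathcal{A}_1$ and claim the resulting term $c_0\rho^{1/2}\underline{\omega}(a_2)$ ``fits inside the width-$\rho$ cushion $\mathcal{L}^1(\rho)\setminus\mathcal{L}^0(\rho)$''. This fails twice: $\Omega^0(t)$ demands that the image land in $\mathcal{L}^0(\rho)$ itself, so no cushion is available; and even if one enlarged the target, $\rho^{1/2}\gg\rho$ while $\mathcal{L}^0(\rho)$ is a union of intervals whose components (coming from projections of level-two rectangles) may have length only of order $\rho$, so an uncontrolled shift of order $\rho^{1/2}$ cannot be absorbed. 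Such cross-terms would also make your events depend on two coordinates, spoiling the exact mutual independence (which, not merely pairwise independence, is what the product bound requires). The paper eliminates the problem at the level of definitions: in an ``independent'' family the second letters are required to lie in $\mathcal{A}_2$, disjoint from the perturbed alphabet $\mathcal{A}_1$, so that $t_i(\omega_i)=c_0\omega_i+c_{10}$ exactly and depends on the single coordinate $\omega_i=\underline{\omega}(a^i_1)$; your proof needs to impose (or prove it can impose) this disjointness rather than absorb an error.
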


We have not yet defined $\mathcal{A}_1, \mathcal{A}_2 \subset \mathcal{A}$ and $\mathcal{L}^{0}(\rho)$. 
They are constructed in such a way that Proposition \ref{key_lem} holds. Below we prove Theorem \ref{thm1} assuming 
Proposition \ref{key_lem}. 
In section \ref{construct1} we explain how to construct $\mathcal{L}^0(\rho)$ using $\mathcal{A}_1, \mathcal{A}_2$, and in section \ref{construct2} we construct 
the sets $\mathcal{A}_1, \mathcal{A}_2$ and show that the measure of the set $\mathcal{L}^{0}(\rho)$ is bounded away from 
zero uniformly in $\rho > 0$. Combining all these properties we prove Proposition \ref{key_lem} in section \ref{key_prop}. 

\begin{rem}
Proposition \ref{key_lem} says that, if $\rho > 0$ is sufficiently small then $\Omega^0(t)$ is ``almost the whole set $\Omega$". 
Therefore, if $t \in \mathcal{L}^{1}(\rho)$ then with ``very high probability'' 
$t$ can come back to ``well inside $\mathcal{L}^1(\rho)$" by the action of a renormalization operator. 
\end{rem}

\begin{rem}\label{daizi}
Here is a heuristic argument why Proposition \ref{key_lem} holds: 
the set $\mathcal{L}^{0}(\rho)$ is formed of the values of $t$ such that $|t| < 1 + s_0$ and such that 
$| T_{\underline{b}} T'_{\underline{b}'}(t) |$ is bounded by $1 + s_0$ for at least $c^2_{2} \rho^{-\frac{1}{2}(d + d'-1)}$ many pairs of 
$(\underline{b}, \underline{b}') \in \mathcal{A}^2 \times \mathcal{A}'^{2}$. 
Furthermore, $\mathcal{L}^0(\rho)$ has positive Lebesgue measure which is bounded away from zero uniformly in $\rho > 0$ 
(in fact $\mathcal{L}^{0}(\rho)$ is a union of intervals).  
Therefore, if $t \in \mathcal{L}^{0}(\rho)$ then it is reasonable to expect that 
for each such $( \underline{b}, \underline{b}') \in \mathcal{A}^2 \times \mathcal{A}'^{2}$, 
$t$ can return to $\mathcal{L}^{0}(\rho)$ by the action of the renormalization operator $T_{ \underline{b} } T'_{ \underline{b}'}$
with probability greater than $| \mathcal{L}^0( {\rho} ) | / 2 c_0$. Denote this value by $c'_1$. 
 Then,   
if $t \in \mathcal{L}^{0}(\rho)$ the probability that $t$ cannot return to $\mathcal{L}^{0}(\rho)$ by the action of renormalization operators is expected to be smaller than 
$(1 - c'_{1})^{c^2_{2} \rho^{-\frac{1}{2}(d + d'-1)}}$. 
Since the sizes of $\mathcal{L}^{0}(\rho)$ and $\mathcal{L}^{1}(\rho)$ are ``almost the same'', the same argument holds for $t \in \mathcal{L}^{1}(\rho)$. 
This explains Proposition \ref{key_lem}. The formal proof is given in section \ref{key_prop}. 
\end{rem}


The set $\mathcal{L}^1(\rho)$ satisfies  
\begin{equation*}
t \in \mathcal{L}^1(\rho) \implies |t| \leq 1 + s_0 + \rho. 
\end{equation*}
We choose a finite $\rho^{ 5/2 }$-dense subset $\Delta$ of $\mathcal{L}^1(\rho)$. 
Note that $\# \Delta \leq 2(1 + s_0) \rho^{ -5/2 }$. 
Now, if $\rho > 0$ is small enough, 
\begin{equation*}
2(1 + s_0) \, \rho^{-5/2} \exp \left( -c_1 \rho^{ -\frac{1}{2} ( d + d' - 1 ) } \right) < 1, 
\end{equation*}
and therefore we can find $\underline{\omega}_0 \in \Omega$ such that 
$\underline{\omega}_0 \in \Omega^0(t)$ for all $t \in \Delta$.

\begin{centering}
\begin{figure}[t]
\includegraphics[scale=0.88]{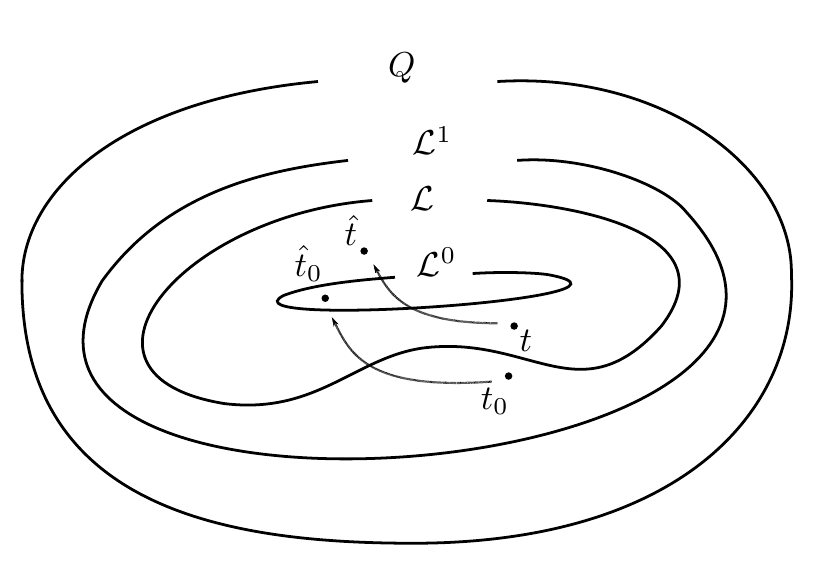}
\caption{Recurrent set}
\label{recurrence_figure}
\end{figure}
\end{centering}

\begin{rem}
This is saying that for any $t \in \Delta$, $t$ can return to $\mathcal{L}^0(\rho)$ by an 
action of the renormalization operator of the form 
$T^{\underline{\omega}_0}_{\underline{b}} T'_{\underline{b}'}$. 
\end{rem}

Theorem \ref{thm1} follows from the following claim: 

\begin{claim}
For $(K^{\underline{\omega}_0}, K')$, $\mathcal{L}(\rho)$ is a nonempty compact recurrent set. 
\end{claim}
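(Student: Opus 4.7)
The plan is to verify directly that $\mathcal{L}(\rho)$ fulfills the three requirements — nonemptiness, compactness, and the recurrence condition — for the perturbed pair $(K^{\underline{\omega}_0}, K')$.

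Nonemptiness and compactness will be essentially free from the construction in Section~\ref{construct}: $\mathcal{L}^0(\rho)$ will be built as a nonempty bounded union of intervals (cf.\ Remark~\ref{daizi}), so its closed $\rho/2$-neighborhood $\mathcal{L}(\rho)$ is nonempty, bounded and closed, hence compact.

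For the recurrence condition, fix any $u \in \mathcal{L}(\rho)$. Since $\mathcal{L}(\rho) \subset \mathcal{L}^1(\rho)$, the $\rho^{5/2}$-density of $\Delta$ supplies $t \in \Delta$ with $|u - t| < \rho^{5/2}$. By the choice of $\underline{\omega}_0 \in \bigcap_{t \in \Delta} \Omega^0(t)$, there exist $a_1, a_2 \in \mathcal{A}$ and $a'_1, a'_2 \in \mathcal{A}'$ such that
\[ \hat{t} \;:=\; T^{\underline{\omega}_0}_{a_1 a_2} T'_{a'_1 a'_2}(t) \;\in\; \mathcal{L}^0(\rho). \]
Under the identification (\ref{identification0}), the renormalization operator $T^{\underline{\omega}_0}_{a_1 a_2} T'_{a'_1 a'_2}$ acts as an affine map with linear coefficient $\rho^{-1}$ — the inverse of the total contraction $\rho$ of $f^{\underline{\omega}_0}_{a_1} \circ f^{\underline{\omega}_0}_{a_2}$ — and therefore
\[ \left| T^{\underline{\omega}_0}_{a_1 a_2} T'_{a'_1 a'_2}(u) - \hat{t} \right| \;=\; \rho^{-1}\,|u - t| \;<\; \rho^{-1}\cdot\rho^{5/2} \;=\; \rho^{3/2}. \]
For $\rho$ sufficiently small, $\rho^{3/2} < \rho/2$, so the image $T^{\underline{\omega}_0}_{a_1 a_2} T'_{a'_1 a'_2}(u)$ lies strictly within distance $\rho/2$ of a point of $\mathcal{L}^0(\rho)$, i.e., in the interior of $\mathcal{L}(\rho)$.

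The only subtlety I anticipate is a minor definitional mismatch: the definition of recurrent set uses single-letter operators $T_a T'_{a'}$, whereas Proposition~\ref{key_lem} produces two-letter ones. I would resolve this either by adopting the obvious length-two variant of the definition (since Lemma~\ref{crucial} permits arbitrary sequences $a_i, a'_i$, the implication ``recurrent set with interior $\Rightarrow K-K'$ contains an interval'' carries over verbatim), or by writing $T^{\underline{\omega}_0}_{a_1 a_2} T'_{a'_1 a'_2} = (T^{\underline{\omega}_0}_{a_2} T'_{a'_2}) \circ (T^{\underline{\omega}_0}_{a_1} T'_{a'_1})$ and reading the above estimate as a two-step recurrence. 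No new ideas are required; the substance remains the simple scaling-and-density estimate displayed above.
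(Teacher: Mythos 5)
Your argument is correct and essentially identical to the paper's own proof: pick a $\rho^{5/2}$-close point of $\Delta$, apply the renormalization operator guaranteed by $\underline{\omega}_0\in\Omega^0(t)$, and use the fact that it expands distances by exactly $\rho^{-1}$ to get $\rho^{3/2}<\rho/2$, landing inside $\mathcal{L}(\rho)$. The only addition is your explicit remark on the single-letter versus two-letter operator mismatch in the definition of recurrent set, a point the paper glosses over and which is correctly resolved by the length-two variant of the definition via Lemma \ref{crucial}.
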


\begin{proof}[proof of the claim]
Let $t \in \mathcal{L}(\rho)$. 
Let us take $t_0 \in \Delta$ such that $|t - t_0| < \rho^{ 5/2 }$. 
By the choice of $\underline{\omega}_0$, we have $\underline{\omega}_0 \in \Omega^{0}( t_0 )$.  
Therefore, there exist $\underline{b} \in \mathcal{A}^2$,  $\underline{b}' \in \mathcal{A}'^2$ such that, writing 
\begin{equation*}
T_{\underline{b}}^{\underline{\omega}_0} T'_{\underline{b}'}(t_0) = \hat{t}_0, 
\end{equation*}
we have $\hat{t}_0  \in \mathcal{L}^0(\rho)$. Let 
\begin{equation*}
T_{\underline{b}}^{\underline{\omega}_0} T'_{\underline{b}'} (t) = \hat{t}.
\end{equation*}
Since $| \hat{t} - \hat{t}_0 | = | t - t_0 | \cdot \rho^{-1} <  \rho^{5/2} \cdot \rho^{-1} = \rho^{ 3/2 } < \rho / 2$, 
we obtain $\hat{t} \in \mathcal{L}(\rho)$. 
\end{proof}

\section{Construction of the sets $\mathcal{A}_1$, $\mathcal{A}_2$ and $\mathcal{L}^0(\rho)$ }\label{construct}

\subsection{Construction of $\mathcal{L}^0(\rho)$}\label{construct1}
We first construct the set $\mathcal{L}^0(\rho)$. 
The construction involves $\mathcal{A}_1, \mathcal{A}_2 \subset \mathcal{A}$, which we will 
define in section \ref{construct2}. 
With $c_{2} > 0$ conveniently small, to be chosen later, let 
\begin{equation*}
N = c^2_{2} \rho^{-\frac{1}{2} ( d + d' - 1 ) }. 
\end{equation*}
Let $\{ \underline{b}^1, \underline{b}^2, \cdots ,\underline{b}^N \}$ be a subset of $\mathcal{A}^2$, and 
write $\underline{b}^i = a^i_1 a^i_2 \ (i = 1, 2, \cdots, N)$ for some $a^i_1$, $a^i_2 \in \mathcal{A}$. 
Then, we say that the words $\underline{b}^1, \underline{b}^2, \cdots ,\underline{b}^N$ are \emph{independent} 
if 
\begin{itemize}
\item[(i)] the words $a_1^1, a_1^2, \cdots, a_1^N$ are mutually distinct;
\item[(ii)] $a^i_1 \in \mathcal{A}_1$, $a^i_2 \in \mathcal{A}_2$ for all $i = 1, 2, \cdots, N$.
\end{itemize}

We then define $\mathcal{L}^0(\rho)$ to be the set of points $t \in \mathbb{R}$ such that $|t| < 1 + s_0$ and the following holds: 
there exist pairs 
$( \underline{b}^1, \underline{b}'^1 ), \cdots, ( \underline{b}^N, \underline{b}'^N )$ in 
$\mathcal{A}^2 \times \mathcal{A}'^2$ such that the words $\underline{b}^1, \cdots, \underline{b}^N$ are independent and if we set 
\begin{equation*}
T_{\underline{b}^i} T'_{\underline{b}'^{i}} (t) = t_i, 
\end{equation*}
then we have $|t_i| \leq 1 + s_0$ for all $1 \leq i \leq N$.

\begin{rem}\label{box_in_box}
Geometrical interpretation of the above definition is the following: 
consider $K \times K'$ in $\mathbb{R}^2$. Let $t \in \mathcal{L}^0(\rho)$, and let 
$( \underline{b}^1, \underline{b}'^1 ), \cdots, ( \underline{b}^N, \underline{b}'^N )$ 
be the associated pairs of words in $\mathcal{A}^2 \times \mathcal{A}'^2$. 
Write $\underline{b}^i = a^i_1 a^i_2$, $\underline{b}'^{i} = a'^{i}_1 a'^{i}_2 \ (i = 1, 2, \cdots, N)$. 
Then $I(a^{i}_1) \times I'(a'^{i}_1)$ are rectangles of size $\rho^{1/2} \times s_0 \rho^{1/2}$, 
and $I( \underline{b}^i ) \times I'( \underline{b}'^i )$ are rectangles of size $\rho \times s_0 \rho$ 
which are inside $I(a^i_1) \times I'(a'^i_1)$. 
The line $\ell_{t}$ is ``very close" to those $N$ rectangles $I( \underline{b}^i ) \times I'( \underline{b}'^i )$. 
\end{rem}

\begin{rem}\label{indep}
We perturb the Cantor set $K$ by perturbing the maps $f_a \ (a \in \mathcal{A}_1)$. 
By this perturbation, $N$ rectangles $I(a^i_1) \times I'(a'^i_1)$ ``independently" moves horizontaly with order $\rho$ 
(recall that the size of the rectangles $I(a^i_1) \times I'(a'^i_1)$ 
is of order $\rho^{1/2}$). Also, since $a^i_2 \in \mathcal{A}_2$, 
the rectangle $I( \underline{b}^i) \times I'( \underline{b}'^i)$ ``does not move relative to $I(a^i_1) \times I'(a'^i_1)$", or in other words, 
``relative position" between $I(a^i_1) \times I'(a'^i_1)$ and $I( \underline{b}^i) \times I'( \underline{b}'^i)$ does not change. 
Note also that the order of the perturbation and the size of $I( \underline{b}^i) \times I'( \underline{b}'^i)$ 
are both $\rho$. 
\end{rem}

In the next section, we will prove the following estimate:
\begin{prop}\label{nice_estimate}
If $c_{2} > 0$ is sufficiently small, there exists $c_{3} > 0$ such that $| \mathcal{L}^{0}(\rho) | > c_{3}$ for all $\rho > 0$. 
\end{prop}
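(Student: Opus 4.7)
The plan is a two-scale Cauchy--Schwarz argument driven by the $L^2$-density of $\mu_{K, K'}$. Translating the membership condition geometrically as in Section~\ref{recurrence}, $|T_{\underline{b}} T'_{\underline{b}'}(t)| \leq 1+s_0$ is equivalent to $t \in J(\underline{b},\underline{b}') := I(\underline{b}) - I'(\underline{b}')$, an interval of length $(1+s_0)\rho$. Writing $\chi_{\underline{b},\underline{b}'}$ for its indicator, set
\[
S_*(t) := \sum_{\substack{a_1 \in \mathcal{A}_1,\, a_2 \in \mathcal{A}_2 \\ a'_1, a'_2 \in \mathcal{A}'}} \chi_{a_1 a_2,\, a'_1 a'_2}(t), \qquad S_*^{a_1}(t) := \sum_{\substack{a_2 \in \mathcal{A}_2 \\ a'_1, a'_2 \in \mathcal{A}'}} \chi_{a_1 a_2,\, a'_1 a'_2}(t),
\]
so that $\mathcal{L}^0(\rho) = \{|t|<1+s_0\} \cap \{D \geq N\}$, where $D(t) := \#\{a_1 \in \mathcal{A}_1 : S_*^{a_1}(t) > 0\}$.

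At the level-$\rho$ scale I would establish $\|S_*\|_1 \asymp \rho^{1-d-d'}$ (by direct counting, using that Section~\ref{construct2} yields $|\mathcal{A}_i| \geq c\,|\mathcal{A}|$) and $\|S_*\|_2^2 \leq C \rho^{2-2(d+d')}$, the latter via identification of $S_*/\rho^{1-d-d'}$ with a $\rho$-scale mollification of the density of $\mu_{K,K'}$ and boundedness of mollification on $L^2$. At the level-$\rho^{1/2}$ scale, the self-similarity of $\mu \times \mu'$ under $(f_{a_1}, f'_{a'_1})$ lets me rewrite the partial sum $S_*^{a_1, a'_1}(t) := \sum_{a_2, a'_2} \chi_{a_1 a_2, a'_1 a'_2}(t)$, via the change of variables $t = \rho^{1/2} t' + c_{a_1, a'_1}$, as a level-$1$ counting function for $K \times K'$; the same $L^2$-argument applied at scale $\rho^{1/2}$ then gives
\[
\int \bigl(S_*^{a_1, a'_1}(t)\bigr)^2\,dt \;\leq\; C\,\rho^{(3-2d-2d')/2}.
\]
For each $t$, only $O(1)$ values of $a'_1 \in \mathcal{A}'$ can have $S_*^{a_1, a'_1}(t)>0$ (the strip projections $I(a_1)-I'(a'_1)$ have length $(1+s_0)\rho^{1/2}$ and overlap with $O(1)$ multiplicity, since the $I'(a'_1)$'s are disjoint intervals of length $s_0\rho^{1/2}$), so Cauchy--Schwarz yields $(S_*^{a_1}(t))^2 \lesssim \sum_{a'_1}(S_*^{a_1, a'_1}(t))^2$, and summing over $(a_1,a'_1)\in\mathcal{A}_1\times\mathcal{A}'$ gives
\[
\sum_{a_1 \in \mathcal{A}_1} \int \bigl(S_*^{a_1}(t)\bigr)^2\,dt \;\leq\; C\,\rho^{(3-3d-3d')/2}.
\]

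The two estimates combine through the pointwise Cauchy--Schwarz $S_*(t) \leq \sqrt{D(t)}\,\bigl(\sum_{a_1}(S_*^{a_1})^2\bigr)^{1/2}$ applied to the decomposition $S_*(t) = \sum_{a_1 : S_*^{a_1}(t)>0} S_*^{a_1}(t)$. Integrating on $\{D < N\}$ and applying a further Cauchy--Schwarz in $t$,
\[
\int_{\{D<N\}} S_*\,dt \;\leq\; \sqrt{2(1+s_0)\cdot N \cdot \sum_{a_1} \int (S_*^{a_1})^2\,dt} \;\leq\; C'\,c_2\,\rho^{1-d-d'},
\]
where the cancellation in exponents uses $N = c_2^2\rho^{(1-d-d')/2}$. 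Choosing $c_2$ small enough that the right-hand side is below $\tfrac{1}{2}\|S_*\|_1$, we obtain $\int_{\{D\geq N\}} S_*\,dt \geq \tfrac{1}{2}\|S_*\|_1 \asymp \rho^{1-d-d'}$, and a final Cauchy--Schwarz yields
\[
|\mathcal{L}^0(\rho)| \;\geq\; \bigl(\textstyle\int_{\{D\geq N\}} S_*\,dt\bigr)^2\big/\|S_*\|_2^2 \;\geq\; c_3,
\]
a constant independent of $\rho$.

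The step I expect to be the most technical is the level-$\rho^{1/2}$ $L^2$-estimate, which requires propagating the global $L^2$-density of $\mu_{K,K'}$ to a local version at smaller scale inside each level-$1$ strip, via the self-similar identity $\mu|_{I(a_1)} = \rho^{d/2}(f_{a_1})_*\mu$ and the preservation of $L^2$-bounds for convolutions under one-sided scaling. The side claim in Remark~\ref{daizi} that $\mathcal{L}^0(\rho)$ is a union of intervals is automatic: for each choice of $N$ independent pairs the defining condition is an intersection of $N$ closed intervals $J(\underline{b}^i, \underline{b}'^i)$, hence an interval, and $\mathcal{L}^0(\rho)$ is the (countable) union over all such choices.
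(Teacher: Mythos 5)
Your argument is correct, and it reaches Proposition \ref{nice_estimate} by a genuinely different route than the paper. The paper never sets up your two-scale $L^2$ machinery: it classifies level-$\rho^{1/2}$ pairs $(a,a')$ as good or bad by a pointwise multiplicity threshold $c_2^{-1}\rho^{-\frac{1}{2}(d+d'-1)}$, shows via Cauchy--Schwarz and a covering argument (Lemma \ref{bad_good}) that bad pairs are a small fraction of $\mathcal{B}$, builds $\mathcal{A}_1,\mathcal{A}_2$ from good pairs, and then runs an $L^1$/$L^\infty$ Chebyshev-type argument on the counting functions $\varphi_0\leq\varphi$: goodness supplies the pointwise cap $\varphi\leq c_2^{-1}\rho^{-\frac{1}{2}(d+d'-1)}$, the bound $|J_0(a_1,a'_1)|\gtrsim c_2\rho^{1/2}$ gives $\int\varphi_0\gtrsim c_2\rho^{-\frac{1}{2}(d+d'-1)}$, and splitting $\int\varphi_0$ over $E=\{\varphi_0\geq N\}$ and its complement yields $|E|\geq c_3$ with $E\subset\mathcal{L}^0(\rho)$. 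You bypass the good/bad dichotomy altogether: a global $L^2$ bound for $S_*$ at scale $\rho$ (mollification of $\chi$), a per-strip $L^2$ bound at scale $\rho^{1/2}$ coming from the exact identity $(\mu\times\mu')|_{I(a_1)\times I'(a'_1)}=\rho^{(d+d')/2}(f_{a_1}\times f'_{a'_1})_*(\mu\times\mu')$ (both factors contract by the same $\rho^{1/2}$, so the rescaled projection is an affine copy of $\mu_{K,K'}$ and no genuinely one-sided scaling issue arises), and Cauchy--Schwarz through the counting variable $D$; I checked the exponents ($\rho^{3/2-(d+d')}$ per strip, $\rho^{\frac{3}{2}(1-d-d')}$ after summing, $c_2\rho^{1-d-d'}$ on $\{D<N\}$) and they cancel as you claim. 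What each buys: your route uses only $|\mathcal{A}_1|,|\mathcal{A}_2|\geq c|\mathcal{A}|$ from Section \ref{construct2} (which does follow from $|\mathcal{G}^{(\ell)}|\geq\frac{1}{3}|\mathcal{G}|$), so it shows the good-pair structure is not actually needed for this proposition, at the price of the localized rescaling step; the paper's route is more elementary (indicator counting plus one covering lemma) and its pointwise multiplicity control is what underlies the heuristic of Remark \ref{daizi}. Two harmless inaccuracies in your write-up: the claimed identity $\mathcal{L}^0(\rho)=\{|t|<1+s_0\}\cap\{D\geq N\}$ is really only the inclusion $\supseteq$, since $t\in J(\underline{b},\underline{b}')$ means the renormalized configuration is linked, which gives $|T_{\underline{b}}T'_{\underline{b}'}(t)|\leq 1+s_0$ but not conversely --- this is the direction your lower bound needs, so nothing breaks; and in your closing side remark the conditions $|T_{\underline{b}^i}T'_{\underline{b}'^i}(t)|\leq 1+s_0$ cut out intervals slightly longer than $J(\underline{b}^i,\underline{b}'^i)$, though the conclusion that $\mathcal{L}^0(\rho)$ is a finite union of intervals stands.
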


\subsection{Construction of $\mathcal{A}_1, \mathcal{A}_2$ and proof of Proposition \ref{nice_estimate}}\label{construct2}
For $( a, a' ) \in \mathcal{A} \times \mathcal{A}'$, 
We have  
\begin{equation}\label{measure}
c_{4}^{-1} \rho^{\frac{1}{2}(d + d')} \leq \mu \times \mu' ( K(a) \times K'(a') ) \leq 
c_{4} \rho^{\frac{1}{2} (d + d')},
\end{equation}
where $K(a) = K \cap I(a)$ and $K'(a') = K' \cap I'(a')$. 
Write $J( a, a' ) := 
\pi ( I( a ) \times I' ( a' ) )$.
Then 
\begin{equation*}
| J(a, a') | = (1 + s_0) \rho^{1/2}. 
\end{equation*}
We call $(a, a')$ \emph{good} if there are no more than 
$c_{2}^{-1} \rho^{-\frac{1}{2}( d + d' - 1 )}$ intervals $J( \tilde{a}, \tilde{a}' )$
whose centers are distant from the center of $J(a, a')$ by less than $(1 + s_0) \rho^{ 1/2 }$. 
Call $(a, a')$ \emph{bad} if it is not good. We denote the set of good pairs by $\mathcal{G}$, 
and the set of all rectangles of the form 
$I(a) \times I'(a')$ by $\mathcal{B}$.

\begin{rem}\label{why_N}
By (\ref{measure}), $| \mathcal{B} |$ is of order 
$\rho^{-\frac{1}{2}( d + d' )}$, and the size of these rectangles is of order $\rho^{1/2}$. 
Therefore, for any given $(a, a') \in \mathcal{A} \times \mathcal{A}'$, the number of rectangles which are projected to 
``somewhere nearby" to $J(a, a')$ is expected to be the of order 
\begin{equation*}
\rho^{-\frac{1}{2}(d + d')} / \rho^{1/2} = \rho^{-\frac{1}{2}(d + d' - 1)}. 
\end{equation*}
Therefore, if $c_{2}$ is sufficiently small then it is reasonable to expect that most of the pairs $(a, a')$ are good pairs. 
The next lemma shows that this is indeed true.  
\end{rem}

Recall that $\mu_{K, K'}$ has $L^2$-density, where $\mu_{K, K'}$ is the push-forward of the measure $\mu \times \mu'$ under the map $\pi: (x, x') \mapsto x - x'$. We denote the density by $\chi$. Write $c_5 = \left\| \chi  \right\|^2_{L^2}$.

\begin{lem}\label{bad_good}
The number of bad pairs $(a, a')$ is less than 
\begin{equation*}
6 c_2 c^2_{4}  c_{5} (1 + s_0) \rho^{-\frac{1}{2}( d + d' )}. 
\end{equation*}
In particular, if $c_{2}$ is sufficiently small, we have 
\begin{equation}\label{ufufu}
|\mathcal{G}| \geq \frac{15}{16} | \mathcal{B} |.
\end{equation} 
\end{lem}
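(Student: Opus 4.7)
My plan is a double-counting / energy argument that directly exploits the $L^2$ bound $\|\chi\|_2^2 = c_5$. Write $r:=(1+s_0)\rho^{1/2}=|J(a,a')|$ for the common length of the projected intervals, let $c_{a,a'}$ denote the center of $J(a,a')$, and set $m_{a,a'}:=\mu\times\mu'(K(a)\times K'(a'))$, so by (\ref{measure}), $c_4^{-1}\rho^{(d+d')/2}\leq m_{a,a'}\leq c_4\rho^{(d+d')/2}$. Let $\mathcal{B}_{\mathrm{bad}}$ denote the set of bad pairs. For each bad $(a,a')$ there are, by definition, more than $N:=c_2^{-1}\rho^{-(d+d'-1)/2}$ companions $(\tilde a,\tilde a')$ with $|c_{a,a'}-c_{\tilde a,\tilde a'}|<r$, so the weighted count
\[
S\;:=\;\sum_{\substack{(a,a'),\,(\tilde a,\tilde a')\\|c_{a,a'}-c_{\tilde a,\tilde a'}|<r}}\!\!m_{a,a'}\,m_{\tilde a,\tilde a'}
\quad\text{satisfies}\quad
S\;\geq\;|\mathcal{B}_{\mathrm{bad}}|\cdot N\cdot c_4^{-2}\rho^{d+d'}.
\]

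Next I would bound $S$ from above using the $L^2$-density. Let $\nu_{a,a'}$ be the pushforward under $\pi$ of $\mu\times\mu'$ restricted to $K(a)\times K'(a')$; it has total mass $m_{a,a'}$ and is supported in $J(a,a')$. Since the products $K(a)\times K'(a')$ are pairwise disjoint in $\mathbb{R}^2$, we have $\mu_{K,K'}=\sum_{(a,a')}\nu_{a,a'}$. Whenever $|c_{a,a'}-c_{\tilde a,\tilde a'}|<r$, every $x\in J(a,a')$ and $y\in J(\tilde a,\tilde a')$ satisfy $|x-y|<2r$, so
\[
S\;\leq\;\int\!\!\!\int_{|x-y|<2r}\!d\mu_{K,K'}(x)\,d\mu_{K,K'}(y)\;=\;\int\!\!\!\int_{|x-y|<2r}\!\chi(x)\chi(y)\,dx\,dy\;\leq\;4r\,c_5,
\]
where the final step uses $2\chi(x)\chi(y)\leq\chi(x)^2+\chi(y)^2$ together with $\int\chi^2=c_5$ and $|\{y:|x-y|<2r\}|=4r$.

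Combining the two bounds and substituting $N$ and $r$ yields $|\mathcal{B}_{\mathrm{bad}}|\leq 4c_2c_4^2c_5(1+s_0)\rho^{-(d+d')/2}$; the constant $6$ in the statement is obtained by a slightly more generous choice of the enlargement factor (for instance, taking the neighbourhood $|x-y|<3r$ to preserve some slack for the endpoint geometry). The second assertion (\ref{ufufu}) is then immediate: (\ref{measure}) forces $|\mathcal{B}|\geq c_4^{-1}\rho^{-(d+d')/2}$, so $|\mathcal{B}_{\mathrm{bad}}|/|\mathcal{B}|\leq 6c_2c_4^3c_5(1+s_0)$, which can be made $\leq 1/16$ by choosing $c_2$ small depending on $c_4,c_5,s_0$. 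The only delicate step is the upper bound on $S$: one must confirm that the bilinear sum over pairs of rectangles actually embeds into the double integral against $\chi\otimes\chi$, which rests on the disjointness of the products $K(a)\times K'(a')$ in the plane rather than on any injectivity of $\pi$, so that $\mu_{K,K'}$ decomposes as $\sum\nu_{a,a'}$ with $\nu_{a,a'}$ concentrated in $J(a,a')$ and total masses exactly $m_{a,a'}$.
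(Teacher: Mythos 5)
Your argument is correct, and it reaches the stated bound (in fact with the better constant $4$ in place of $6$, which still gives the lemma since only an upper bound is claimed), but it is not the paper's route. The paper argues locally: for each bad pair it notes that all the nearby intervals $J(\tilde a,\tilde a')$ sit inside the tripled interval $3J(a,a')$, so $\int_{3J(a,a')}\chi$ is at least a fixed fraction of $|3J(a,a')|$; the Cauchy--Schwarz inequality then upgrades this to $\int_{3J(a,a')}\chi^2\geq\tfrac13 c_2^{-1}c_4^{-1}(1+s_0)^{-1}\int_{3J(a,a')}\chi$, and a covering step (extracting from the tripled intervals a subfamily covering their union $J^*$ with multiplicity at most two) sums these local estimates to give $\int_{J^*}\chi\leq 6c_2c_4c_5(1+s_0)$; since each bad pair deposits mass at least $c_4^{-1}\rho^{\frac12(d+d')}$ in $J^*$, the count of bad pairs follows. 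You instead run a global bilinear (energy) argument: the quantity $S=\sum m_{a,a'}m_{\tilde a,\tilde a'}$ over pairs with nearby centers is bounded below via the definition of badness and (\ref{measure}), and above by $(\mu_{K,K'}\times\mu_{K,K'})\{|x-y|<2r\}\leq 4r\,c_5$ using $2\chi(x)\chi(y)\leq\chi(x)^2+\chi(y)^2$; the key decomposition $\mu_{K,K'}=\sum_{(a,a')}\nu_{a,a'}$ with $\nu_{a,a'}$ supported in $J(a,a')$ and of mass $m_{a,a'}$ is exactly as you say a consequence of the disjointness of the rectangles $K(a)\times K'(a')$ in the plane, and it is implicitly used by the paper as well. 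What your approach buys is the elimination of the covering/bounded-multiplicity step and a cleaner constant; what the paper's approach buys is a statement ($\int_{J^*}\chi$ small over the union of bad projections) that is slightly more geometric and closer in spirit to the Moreira--Yoccoz localization. Your concluding step for (\ref{ufufu}) -- namely $|\mathcal{B}|\geq c_4^{-1}\rho^{-\frac12(d+d')}$ from total mass one and (\ref{measure}), so the bad fraction is $O(c_2)$ -- matches the paper's intent. One cosmetic remark: your speculation that the paper's constant $6$ comes from an enlargement factor is not quite right (it comes from the multiplicity-two covering), but this has no bearing on the validity of your proof.
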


\begin{proof}

Let $(a, a')$ be bad. Then, by the definition, we have 
\begin{equation*}
\begin{aligned}
\int_{3J( a, a' )} \chi 
&\geq c^{-1}_{4} \rho^{ \frac{1}{2}(d + d')} \cdot c_{2}^{-1} \rho^{ -\frac{1}{2} (d + d' -1) } \\
&= c^{-1}_{2} c^{-1}_{4} \rho^{1/2} = \frac{1}{3} c_{2}^{-1} c_{4}^{-1} (1 + s_0)^{-1} | 3J( a, a' ) |, 
\end{aligned}
\end{equation*}
where $3J(a, a')$ is the interval of the same center as $J(a, a')$ and length $3| J(a, a') |$. 
By the Cauchy-Schwarz inequality, 
\begin{equation*}
\begin{aligned}
\frac{1}{3} c_{2}^{-1} c_{4}^{-1} (1 + s_0)^{-1} | 3 J( a, a' ) | \int_{3 J( a, a' )} \chi  &\leq 
\left( \int_{3 J( a, a' )} \chi  \right)^{2} \\
&\leq | 3 J ( a, a' ) | \int_{3 J( a, a' )} \chi^{2}, 
\end{aligned}
\end{equation*}
and thus 
\begin{equation*}
\int_{3 J( a, a' )} \chi^{2} \geq 
\frac{1}{3} c_{2}^{-1} c_{4}^{-1} (1 + s_0)^{-1}  \int_{3 J( a, a' )} \chi. 
\end{equation*}
Let $J^{*}$ be the union over all bad pairs $( a, a' )$ of the intervals $3J( a, a' )$. One can 
extract a subfamily of intervals whose union is $J^{*}$ and does not cover any point more than twice. 
Then we obtain 
\begin{equation*}
\int_{J^*} \chi^{2}  \geq 
\frac{1}{6} c_{2}^{-1} c_{4}^{-1} (1 + s_0)^{-1}  \int_{J^{*}} \chi.
\end{equation*}
Therefore,   
\begin{equation*}
 \int_{J^{*}} \chi \leq 6 c_2 c_{4} c_{5} (1 + s_0). 
\end{equation*}
As $J^{*}$ contains $J( a, a' )$ for all bad $( a, a' )$, together with (\ref{measure}) 
the estimate of the lemma follows. 
\end{proof}
 

\begin{rem}
The fact that $\mu_{K, K'}$ has $L^{2}$-norm is used in the above lemma. 
If $\mu_{K, K'}$ does not have $L^{2}$-norm, 
then it may happen that the majority of rectangles $I(a) \times I'(a')$ are projected to 
``somewhere close together", resulting that there are very few good rectangles. 
\end{rem}

\begin{rem}\label{long}
 Let $J$ be the union of intervals of the form $J(a, a')$, where $(a, a')$ is a good pair. Then, by Lemma \ref{bad_good}, we have 
\begin{equation}\label{long_time}
\begin{aligned}
|J| &\geq | \mathcal{G} | / ( c_{2}^{-1} \rho^{-\frac{1}{2}(d + d' - 1)} )  \cdot (1 + s_0) \rho^{1/2} \\
&\geq \frac{15}{16} c_4^{-1} \rho^{-\frac{1}{2}( d + d' )} \cdot c_2 \rho^{\frac{1}{2}(d + d' - 1)} \cdot (1 + s_0) \rho^{1/2} 
= \frac{15}{16} c_2 c_4^{-1} (1 + s_0). 
\end{aligned}
\end{equation} 
The heuristic reason of this is that the majority of pairs $(a, a')$ are good pairs, and the set of intervals 
\begin{equation*}
\left\{ J(a, a') : (a, a') \text{ is a good pair} \right\}
\end{equation*}
 do not ``cluster together too much''. 
\end{rem}

By the estimate (\ref{ufufu}), 
we can choose two subsets $\mathcal{A}_1$, $\mathcal{A}_2 \subset \mathcal{A}$ in such a way that 
\begin{itemize}
\item[(i)] for any $a \in \mathcal{A}_1$, $I(a)$ is neither the left nor the right endmost interval;
\item[(ii)] $\displaystyle{ \left| \mathcal{G}^{(\ell)} \right|  \geq \frac{1}{3} |\mathcal{G}| }$, where 
$\displaystyle{ \mathcal{G}^{(\ell)} = \left\{ (a, a') \in \mathcal{G} : a \in \mathcal{A}_{\ell}  \right\} }$ \ $(\ell = 1, 2)$. 
\end{itemize}
For $(a_1, a'_1) \in \mathcal{G}^{(1)}$, we define 
\begin{equation*}
J_{0}( a_1, a'_1 ) = \bigcup_{ (a_2, a'_2) \in \mathcal{G}^{(2)} }  \pi  \left( I( a_1 a_2 ) \times I'( a'_1 a'_2 ) \right). 
\end{equation*}
Then, repeating the argument in Remark \ref{long}, we obtain 
\begin{equation*}
| J_0( a_1, a'_1 ) | \geq c_2 c_{6}  \rho^{ 1/2 }.
\end{equation*}

Let $\varphi$ (resp. $\varphi_0$) be the sum, over $( a, a' ) \in \mathcal{G}^{(1)}$, of the characteristic functions of 
$J( a, a' )$ (resp. $J_0( a, a' )$). 
Note that $\mathrm{supp} \, \varphi, \mathrm{supp} \, \varphi_0 \subset [-(1 + s_0), 1 + s_0]$.  
Since $|\mathcal{G}^{(1)}| \geq c_{7} \rho^{ -\frac{1}{2} (d + d')}$, we have 
\begin{equation*}
\begin{aligned}
\int \varphi_{0} &\geq c_2 c_{6} \rho^{ 1/2 } \cdot c_{7} \rho^{ -\frac{1}{2} (d + d')} \\
&= c_2 c_{8} \rho^{ -\frac{1}{2} (d + d' - 1)}.
\end{aligned}
\end{equation*}
On the other hand, one has
\begin{equation*}
\varphi_0 \leq \varphi \leq c_{2}^{-1} \rho^{ -\frac{1}{2} (d + d' - 1)}. 
\end{equation*}
Let $E = \{  \varphi_0 \geq  c^2_{2} \rho^{ -\frac{1}{2} (d + d' - 1)}  \}$.
Then we have 
\begin{equation*}
\begin{aligned}
|E| \cdot c_{2}^{-1} \rho^{ -\frac{1}{2} (d + d' - 1)} + 
2 (1 + s_0) \cdot  c^2_{2} \rho^{ -\frac{1}{2} (d + d' - 1)} 
&\geq \int_{E} \varphi_0 + \int_{[-(1 + s_0), 1 + s_0] \setminus E} \varphi_0  \\
&= \int \varphi_0 \\
&\geq c_2 c_{8} \rho^{ -\frac{1}{2} (d + d' - 1)}.
\end{aligned}
\end{equation*}
Take $c_2$ small enough so that 
\begin{equation*}
2(1 + s_0) \cdot c^2_2 < \frac{1}{2} c_2 c_8
\end{equation*}
holds. 
Then we obtain 
\begin{equation*}
|E| \geq \frac{1}{2} c_2 c_{8} \cdot c_{2} =: c_{3}. 
\end{equation*}
Since $E \subset \mathcal{L}^0(\rho)$, we have proved that 
\begin{equation*}
| \mathcal{L}^0(\rho)| \geq c_{3}. 
\end{equation*}

\section{Proof of the key Proposition}\label{key_prop}

\subsection{Proof of Proposition \ref{key_lem}}
In this section, we prove Proposition \ref{key_lem}. 
Fix $t \in \mathcal{L}^1(\rho)$. Let $\tilde{t} \in \mathcal{L}^0(\rho)$ be such that $| t - \tilde{t} | < \rho$. 
There exist pairs 
$( \underline{b}^1, \underline{b}'^1 ), \cdots, ( \underline{b}^N, \underline{b}'^N )$ in 
$\mathcal{A}^2 \times \mathcal{A}'^2$ such that the words $\underline{b}^1, \cdots, \underline{b}^N$ are independent, and if we set 
\begin{equation*}
T_{\underline{b}^i} T'_{\underline{b}'^{i}} ( \tilde{t} ) = \tilde{t}_i, 
\end{equation*}
then we have $| \tilde{t}_i| \leq 1 + s_0$ for all $1 \leq i \leq N$. 
Write $\underline{b}^i = a^i_1 a^i_2 \ (i = 1, 2, \cdots, N)$. Let us denote 
\begin{equation*}
\mathcal{A}_3 = \left\{ a^1_1, a^2_1, \cdots, a^N_1 \right\}.
\end{equation*}
Recall that $\mathcal{A}_3 \subset \mathcal{A}_1$. We write 
\begin{equation*}
\begin{aligned}
\Omega &= [-1, 1]^{ \mathcal{A}_3 } \times [-1, 1]^{ \mathcal{A}_1 \setminus \mathcal{A}_3 },  \\
\underline{\omega} &= ( \underline{\omega}', \underline{\omega}'' ), \text{ and \ } 
 \underline{\omega}' = \left( \omega_1, \omega_2, \cdots, \omega_N \right). 
\end{aligned}
\end{equation*}
Notice that, by the construction of $\mathcal{A}_1, \mathcal{A}_2$, as a function of $\underline{\omega}$,  
 $T^{\underline{\omega}}_{ \underline{b}^i } T'_{ \underline{b}'^{i} } (t)$ depends only on 
$\omega_i$, and 
not on $\omega_j \ (j \neq i)$ or $\underline{\omega}''$. We denote 
\begin{equation*}
t_i( \omega_i ) = T^{\underline{\omega}}_{ \underline{b}^i } T'_{ \underline{b}'^{i} } (t).
\end{equation*} 
By Fubini's Theorem, 
Proposition \ref{key_lem} follows from the following claim:
\begin{claim}\label{claim1}
There exists $c''_1 > 0$ such that 
\begin{equation}\label{estimate}
\left|  \left\{ \omega_i : t_i( \omega_i ) \in \mathcal{L}^0(\rho) \right\} \right| \geq c''_1. 
\end{equation}
\end{claim}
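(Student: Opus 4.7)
\textbf{Proof proposal for Claim \ref{claim1}.} The plan is to show that $\omega_i\mapsto t_i(\omega_i)$ is affine in $\omega_i$ with slope of modulus $c_0$, and that its image as $\omega_i$ ranges over $[-1,1]$ engulfs all of $\mathcal{L}^0(\rho)$; the claim will then reduce to a change of variables using $|\mathcal{L}^0(\rho)|\geq c_3$.

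First I would compute the action of the renormalization operator on the $Q$-coordinate. Using the representative of a class $[(h,h')]$ with $h=\mathrm{id}_{[0,1]}$ and $h'(x)=x+u$, and writing $f_a(x)=\rho^{1/2}x+p_a$ and $f'_{a'}(x)=\rho^{1/2}x+q_{a'}$, a direct computation from the definitions gives, for $\underline{b}=a_1a_2$ and $\underline{b}'=a'_1a'_2$,
\begin{equation*}
T_{\underline{b}}T'_{\underline{b}'}(u)=\frac{u-p_{a_1}+q_{a'_1}+\rho^{1/2}(q_{a'_2}-p_{a_2})}{\rho}.
\end{equation*}
Under the perturbation, $p_{a^i_1}$ becomes $p_{a^i_1}+c_0\rho\,\omega_i$, while every other parameter in this expression is unaffected: by the independence property (i), the first letters $a^1_1,\ldots,a^N_1$ are distinct, so varying $\omega_i$ does not change $p_{a^j_1}$ for $j\neq i$; by (ii), $a^i_2\in\mathcal{A}_2$ is never perturbed; and no map $f'_{a'}$ is ever perturbed. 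Consequently
\begin{equation*}
t_i(\omega_i)=t_i(0)-c_0\,\omega_i,
\end{equation*}
an affine function of $\omega_i$ of slope exactly $-c_0$.

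Next I would bound $t_i(0)$ and fix $c_0$. Since the unperturbed operator $T_{\underline{b}^i}T'_{\underline{b}'^i}$ is $\rho^{-1}$-Lipschitz on $Q\simeq\mathbb{R}$ and $|t-\tilde{t}|<\rho$,
\begin{equation*}
|t_i(0)-\tilde{t}_i|=\rho^{-1}|t-\tilde{t}|<1,
\end{equation*}
so $|t_i(0)|\leq 2+s_0$. As $\omega_i$ varies over $[-1,1]$, $t_i(\omega_i)$ sweeps out the interval $J_i:=[t_i(0)-c_0,\,t_i(0)+c_0]$, of length $2c_0$ and centered inside $[-(2+s_0),2+s_0]$. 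Fixing $c_0\geq 3+2s_0$ once and for all guarantees $J_i\supset[-(1+s_0),1+s_0]\supset\mathcal{L}^0(\rho)$. Because $\omega_i\mapsto t_i(\omega_i)$ is an affine bijection of $[-1,1]$ onto $J_i$ with $|dt_i/d\omega_i|=c_0$, a change of variables gives
\begin{equation*}
\left|\{\omega_i\in[-1,1]:t_i(\omega_i)\in\mathcal{L}^0(\rho)\}\right|=\frac{|\mathcal{L}^0(\rho)|}{c_0}\geq\frac{c_3}{c_0}=:c''_1,
\end{equation*}
which is the desired uniform lower bound.

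The main obstacle is the sign-and-scale bookkeeping in the first step: one must verify inside the quotient $Q$ that the linearization of $t_i$ in $\omega_i$ is truly a nonzero constant of order $c_0$ independent of $i$, $\rho$, and $t$. This relies on two built-in features of the setup: the matching of contraction ratios in $\mathcal{F}$ and $\mathcal{F}'$ (both are $\rho^{1/2}$-contractions, so a two-step renormalization scales by exactly $\rho^{-1}$, cancelling the $\rho$ in the perturbation $c_0\rho\omega_i$), and the separation of roles of $\mathcal{A}_1$ and $\mathcal{A}_2$ in the construction of the words $\underline{b}^i$. Once these are secured, the rest is a routine linear measure computation, and $c_0$ may be fixed once and for all at the beginning of Section \ref{construct}.
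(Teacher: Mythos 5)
Your proposal is correct and follows essentially the same route as the paper's proof: both establish that $t_i(\omega_i)$ is affine in $\omega_i$ with slope of modulus $c_0$ and constant term bounded by $2+s_0$ (via the $\rho^{-1}$-Lipschitz bound $|T_{\underline{b}^i}T'_{\underline{b}'^i}(t)-\tilde{t}_i|<1$), and then conclude $\left|\{\omega_i : t_i(\omega_i)\in\mathcal{L}^0(\rho)\}\right| = |\mathcal{L}^0(\rho)|/c_0 \geq c_3/c_0$ once $c_0$ is large enough for the image of $[-1,1]$ to cover $[-(1+s_0),1+s_0]$. Your explicit coordinate formula for the renormalization operator and the explicit choice $c_0\geq 3+2s_0$ merely make precise what the paper leaves implicit.
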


\begin{proof}[proof of the claim]
Write $c_9 =  T_{ \underline{b}^i } T'_{ \underline{b}'^{i} } (t) - T_{ \underline{b}^i } T'_{ \underline{b}'^{i} } (\tilde{t})$. 
Note that $|c_9| < | t - \tilde{t} | \cdot \rho^{-1} < \rho \cdot \rho^{-1} = 1$. Since
\begin{equation*}
\begin{aligned}
t_i( \omega_i ) - \tilde{t}_i &= 
\left( T^{\underline{\omega}}_{ \underline{b}^i } T'_{ \underline{b}'^{i} } (t) -  T_{ \underline{b}^i } T'_{ \underline{b}'^{i} } (t) \right) 
+ \left( T_{ \underline{b}^i } T'_{ \underline{b}'^{i} } (t) - T_{ \underline{b}^i } T'_{ \underline{b}'^{i} } (\tilde{t})  \right) \\
&= c_0 \omega_i \rho \cdot \rho^{-1} + c_9 = c_0 \omega_i + c_9, 
\end{aligned}
\end{equation*}
we obtain $t_i( \omega_i ) = c_0 \omega_i + c_{10}$, where $c_{10} = c_9 + \tilde{t}_i$. Since $|c_{10}| < 2 + s_0$, if $c_0$ is sufficiently large 
we have
\begin{equation*}
\begin{aligned}
\left|  \left\{ \omega_i : t_i( \omega_i ) \in \mathcal{L}^0(\rho) \right\} \right| &= | \mathcal{L}^{0}(\rho) | / c_0 \\
&\geq c_3/ c_0. 
\end{aligned}
\end{equation*}
\end{proof}

\section{Proof of Theorem \ref{thm2} and Theorem \ref{thm3}}\label{proof2}
\subsection{Proof of Theorem \ref{thm2}}
The proof is almost the same as that of Theorem \ref{thm1}, but slight modification is necessary. 
This is because (using the notations in section \ref{construct}) we need to avoid that the rectangle  
$I(\underline{b}^i) \times I(\underline{b}'^i)$ moves vertically relative to the rectangle $I(a^i) \times I(a'^i)$. 
See also Remark \ref{indep}.

We modify the definition of independence in the following way (we define $\tilde{\mathcal{A}}_1$ and $\tilde{\mathcal{A}}_2$ later): 
assume that we are given $( \underline{b}^1, \underline{b}'^1 ), \cdots, ( \underline{b}^N, \underline{b}'^N ) \in 
\mathcal{A}^2 \times \mathcal{A}^2$. 
Write $\underline{b}^i = a^i_1 a^i_2, \ \underline{b}'^i = a'^i_1 a'^i_2$ for some 
$a^i_1$, $a^i_2 \in \mathcal{A}$ and $a'^i_1$, $a'^i_2 \in \mathcal{A}$.  
Then, we say that the set of words 
$( \underline{b}^1, \underline{b}'^1 ), \cdots, ( \underline{b}^N, \underline{b}'^N )$ are \emph{independent} 
if
\begin{itemize}
\item[(i)] the words $a^1_1, \cdots, a^N_1$ are mutually distinct;
\item[(ii)] $a^i_1 \in \tilde{\mathcal{A}}_1$ and $a'^i_1, a^i_2, a'^i_2 \in \tilde{\mathcal{A}}_2$ for all $i = 1, 2, \cdots, N$.
\end{itemize}


The definition of $\tilde{ \mathcal{L} }^0(\rho)$ is exactly the same as that of $\mathcal{L}^0(\rho)$: 
namely, we define $\tilde{ \mathcal{L} }^0(\rho)$ to be the set of points $t \in \mathbb{R}$ such that $|t| < 2$ and there exist 
independent pairs 
$( \underline{b}^1, \underline{b}'^1 ), \cdots, ( \underline{b}^N, \underline{b}'^N )$ in 
$\mathcal{A}^2 \times \mathcal{A}^2$ such that, if we set 
\begin{equation*}
T_{\underline{b}^i} T'_{\underline{b}'^{i}} (t) = t_i, 
\end{equation*}
then we have $|t_i| \leq 2$ for all $1 \leq i \leq N$. 

We next define $\tilde{\mathcal{A}}_1, \tilde{\mathcal{A}}_2 \subset \mathcal{A}$.  
Recall that $\mathcal{B}$ is the set of all rectangles of the form $I(a) \times I(a')$, and $\mathcal{G}$ is the set of good pairs. 
Recall also that we have $| \mathcal{G} | \geq \frac{15}{16} | \mathcal{B} |$. We need the following simple lemma: 

\begin{lem}
There exist disjoint subsets $\tilde{ \mathcal{A} }_1, \tilde{ \mathcal{A} }_2 \subset \mathcal{A}$ such that  
\begin{equation*}
| \mathcal{G}^{(\ell m)} | \geq \frac{3}{64} | \mathcal{G} | \ \ ( 1 \leq \ell, m \leq 2), 
\end{equation*}
where 
\begin{equation*}
\mathcal{G}^{(\ell m)} = \left\{ (a, a') : (a, a') \in \mathcal{G}, \, a \in \tilde{ \mathcal{A} }_\ell, \, a' \in \tilde{ \mathcal{A} }_m  \right\}. 
\end{equation*}
\end{lem}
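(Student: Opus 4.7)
The plan is to prove this lemma by the probabilistic method: I will pick $\tilde{\mathcal{A}}_1, \tilde{\mathcal{A}}_2$ from a uniformly random partition of $\mathcal{A}$ into two bins and show that with positive probability all four counts $|\mathcal{G}^{(\ell m)}|$ are simultaneously at least $3|\mathcal{G}|/64$. Write $n = |\mathcal{A}|$, and assign each $a \in \mathcal{A}$ independently to $\tilde{\mathcal{A}}_1$ or $\tilde{\mathcal{A}}_2$ with probability $1/2$ each.

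First I would compute the mean. Writing $|\mathcal{G}^{(\ell m)}|$ as a sum of indicators over $(a, a') \in \mathcal{G}$, each off-diagonal pair contributes expectation $1/4$, while the at most $n$ diagonal pairs (those with $a = a'$) contribute $1/2$ if $\ell = m$ and $0$ if $\ell \neq m$. Combined with $|\mathcal{G}| \geq \tfrac{15}{16} n^2$ (which is (\ref{ufufu}) applied in the self-sum setting), this gives $\mathbb{E}[|\mathcal{G}^{(\ell m)}|] \geq (|\mathcal{G}| - n)/4$ in all four cases.

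Next I would bound the variance. Two indicators associated to distinct pairs $(a, a'), (b, b') \in \mathcal{G}$ are independent whenever $\{a, a'\} \cap \{b, b'\} = \emptyset$, and each pair in $\mathcal{G}$ shares a coordinate with at most $4n$ other pairs; hence $\mathrm{Var}\bigl(|\mathcal{G}^{(\ell m)}|\bigr) \leq C n |\mathcal{G}|$ for an absolute constant $C$. Chebyshev's inequality then yields
\[
\mathbb{P}\bigl[\,|\mathcal{G}^{(\ell m)}| < \mathbb{E}[|\mathcal{G}^{(\ell m)}|] - |\mathcal{G}|/8\,\bigr] = O(1/n),
\]
and a union bound over the four pairs $(\ell, m)$ produces, for $n$ large enough, a realization in which every $|\mathcal{G}^{(\ell m)}|$ is at least $(|\mathcal{G}| - n)/4 - |\mathcal{G}|/8 \geq |\mathcal{G}|/16 > 3|\mathcal{G}|/64$. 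Declaring $\tilde{\mathcal{A}}_1, \tilde{\mathcal{A}}_2$ to be the two bins of this realization completes the argument.

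The only real obstacle is the variance bookkeeping: one has to carefully enumerate the ways two pairs in $\mathcal{G}$ can overlap (share one coordinate, or coincide as unordered pairs) and verify that the number of such overlapping pairs is $O(n|\mathcal{G}|)$. The argument requires $n$, equivalently $\rho^{-1}$, to be sufficiently large, which is consistent with the standing assumption of Section~\ref{outline} that $\rho > 0$ can be taken arbitrarily small.
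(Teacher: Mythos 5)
Your argument is correct, but it proves the lemma by a genuinely different route from the paper. You randomize: assign each letter of $\mathcal{A}$ to one of two bins independently with probability $\tfrac12$, bound the mean of each $|\mathcal{G}^{(\ell m)}|$ from below by $\tfrac14(|\mathcal{G}|-|\mathcal{A}|)$ (correctly accounting for the diagonal pairs $a=a'$, which is the one genuinely delicate point in the self-sum setting), control the variance by $O(|\mathcal{A}|\,|\mathcal{G}|)$ since two indicator variables decorrelate unless the corresponding pairs share a letter, and finish with Chebyshev plus a union bound over the four choices of $(\ell,m)$; the numerics check out, since $(|\mathcal{G}|-|\mathcal{A}|)/4-|\mathcal{G}|/8\geq |\mathcal{G}|/16>\tfrac{3}{64}|\mathcal{G}|$ once $|\mathcal{G}|\geq 4|\mathcal{A}|$, which follows from (\ref{ufufu}). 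The paper instead argues deterministically: it sets $\bar{A}=\{a\in\mathcal{A}: |\mathcal{G}_a|\geq\tfrac34|\mathcal{A}|\}$, shows by a counting contradiction with $|\mathcal{G}|\geq\tfrac{15}{16}|\mathcal{B}|$ that $|\bar{A}|\geq\tfrac34|\mathcal{A}|$, splits $\bar{A}$ into two halves of equal size, and then for each $a$ in either half at most $\tfrac14|\mathcal{A}|$ partners are lost, giving $|\mathcal{G}^{(\ell m)}|\geq\tfrac38|\mathcal{A}|\cdot\tfrac18|\mathcal{A}|=\tfrac{3}{64}|\mathcal{B}|\geq\tfrac{3}{64}|\mathcal{G}|$ directly. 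The trade-off: the paper's pigeonhole argument is fully explicit, works for every alphabet size, and produces the constant $\tfrac{3}{64}$ without any asymptotics, whereas your second-moment argument needs $|\mathcal{A}|$ (equivalently $\rho^{-1}$) large and an honest accounting of the covariance terms, but it is more robust — it uses only that $\mathcal{G}$ occupies a definite proportion of $\mathcal{B}$ and would generalize with no change to partitions into more bins or to less structured ``dense'' families of pairs. Since the paper's standing assumption is that $\rho>0$ is sufficiently small, your largeness requirement is harmless here.
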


\begin{proof}
For any $a \in \mathcal{A}$, we define a subset $\mathcal{G}_a \subset \mathcal{G}$ to be the union of all good pairs $(a, a')$. 
Write 
\begin{equation*}
\bar{A} = \left\{ a \in \mathcal{A} : | \mathcal{G}_a | \geq \frac{3}{4} |\mathcal{A}| \right\}. 
\end{equation*}
\begin{claim}
We have $| \bar{A} | \geq \frac{3}{4} | \mathcal{A} |$. 
\end{claim}
\begin{proof}[proof of the claim]
Assume that $| \bar{A} | < \frac{3}{4} | \mathcal{A} |$. Then, 
\begin{equation*}
\begin{aligned}
| \mathcal{G} | \leq& | \mathcal{A} | \cdot | \bar{ \mathcal{A} } | + \frac{3}{4} | \mathcal{A} | \cdot  | \mathcal{A} \setminus \bar{ \mathcal{A} } | \\
=& \frac{3}{4} | \mathcal{A} |^2 + \frac{1}{4} | \mathcal{A} | \cdot | \bar{ \mathcal{A} } |
< \frac{15}{16} | \mathcal{A} |^2 = \frac{15}{16} |\mathcal{B}|, 
\end{aligned}
\end{equation*}
which is a contradiction. 
\end{proof}

Let us choose $\tilde{ \mathcal{A} }_1$, $\tilde{ \mathcal{A} }_2$ in such a way that 
$\tilde{ \mathcal{A} }_1 \sqcup \tilde{ \mathcal{A} }_2 = \bar{\mathcal{A}}$ and $| \tilde{ \mathcal{A} }_1 | = | \tilde{ \mathcal{A} }_2 |$. 
Then, since $| \tilde{ \mathcal{A} }_i | \geq \frac{3}{8} |\mathcal{A}|$, 
we obtain 
\begin{equation*}
| \mathcal{G}^{(\ell m)} | \geq \frac{3}{8} | \mathcal{A} | \cdot \left( \frac{3}{8} | \mathcal{A} | - \frac{1}{4} | \mathcal{A} | \right) = \frac{3}{64} | \mathcal{B} |. 
\end{equation*}
The result follows from this.  
\end{proof}

The rest is completely analogous to the proof of Theorem \ref{thm1}. 

\subsection{Proof of Theorem \ref{thm3}}
Note that, for $(K, K') \in \mathcal{Y}$, $\pi(K \times K')$ is a homogeneous self-similar set and $\pi( \mu \times \mu' )$ is its 
uniform self-similar measure. 
Therefore, in the case of homogeneous self-similar sets  
Theorem \ref{thm3} can be proven by repeating the proof of Theorem \ref{thm1}. 

Let us consider the non-homogeneous case. 
Notice that, for any non-homogeneous self-similar set $K$, if we take $c_{11} > 0$ large enough 
then for any $0 < \rho < 1$ there exists a set of contractions $\mathcal{F} = \{ f_a \}_{a \in \mathcal{A}}$ such that  
\begin{itemize}
\item[(i)] $K = \displaystyle{ \bigcup_{a \in \mathcal{A}} f_a(K) }$; 
\item[(ii)] the contracting ratios of $f_a \ (a \in \mathcal{A})$ are all bounded between $c^{-1}_{11}$ and $c_{11}$. 
\end{itemize}
Instead of $\rho$-contractions in the case of homogeneous self-similar sets, we perturb the above set of contractions. The rest of the proof is analogous.

\section*{Acknowledgements}
The author would like to acknowledge the invaluable contributions of Anton Gorodetski. 
The author would also like to thank Boris Solomyak for many helpful comments and discussions, 
and the anonymous referee for the careful reading and all the helpful suggestions and remarks. 
Part of the work on this paper was conducted during the 2017 program ``Fractal Geometry and Dynamics" at Institut Mittag-Leffler. The author is grateful to the organizers and staff for their support and hospitality.

\end{document}